\numberwithin{equation}{section}
\def\gc{\mathfrak{c}}
\def\ge{\mathfrak{e}}
\def\gf{\mathfrak{f}}
\def\gg{\mathfrak{g}}
\def\gh{\mathfrak{h}}
\def\gk{\mathfrak{k}}
\def\gp{\mathfrak{p}}
\def\gs{\mathfrak{s}}
\def\gt{\mathfrak{t}}
\def\gu{\mathfrak{u}}
\def\B{\mathbb{B}}
\def\C{\mathbb{C}}
\def\D{\mathbb{D}}
\def\N{\mathbb{N}}
\def\R{\mathbb{R}}
\def\T{\mathbb{T}}
\def\cA{\mathcal{A}}
\def\cF{\mathcal{F}}
\def\cH{\mathcal{H}}
\def\cL{\mathcal{L}}
\def\cT{\mathcal{T}}
\newcommand{\Ad}{\mathrm{Ad}}
\newcommand{\ad}{\mathrm{ad}}
\newcommand{\End}{\mathrm{End}}
\newcommand{\SO}{\mathrm{SO}}
\newcommand{\SU}{\mathrm{SU}}
\newcommand{\U}{\mathrm{U}}
\newcommand{\Tr}{\mathrm{Tr}\, }
\newcommand{\so}{\mathfrak{so}}
\newcommand{\su}{\mathfrak{su}}
\newcommand{\gsp}{\mathfrak{sp}}
\newcommand{\gsl}{\mathfrak{sl}}
\newcommand{\GL}{\mathrm{GL}}
\newcommand{\bs}{\mathbf{s}}
\newcommand{\bt}{\mathbf{t}}
\theoremstyle{plain}
\newtheorem{theorem}{Theorem}
\newtheorem{lemma}[theorem]{Lemma}
\newtheorem{corollary}[theorem]{Corollary}
\newtheorem{proposition}[theorem]{Proposition}
\theoremstyle{definition}
\theoremstyle{remark}
\newtheorem{example}[theorem]{Example}
\newtheorem{remark}[theorem]{Remark}
\numberwithin{theorem}{section}
\begin{document}

\title[Commuting Toeplitz Operators on Bounded Symmetric Domains]{Commuting Toeplitz Operators on Bounded Symmetric Domains and Multiplicity-Free Restrictions of Holomorphic Discrete Series}

\author{Matthew Dawson}
\email{mdawso5@math.lsu.edu}
\address{Department of Mathematics, Louisiana State University, Baton Rouge, LA
70803, U.S.A.}
\thanks{M. Dawson would like to thank the VIGRE program at LSU, DMS-0739382, for support. He would also like to thank CIMAT for its hospitality and support during a visit in September 2013.}

\author{Gestur \'{O}lafsson}
\email{olafsson@math.lsu.edu}
\address{Department of Mathematics, Louisiana State University, Baton Rouge,
LA 70803, U.S.A.}
\thanks{The research of G. \'Olafsson was supported by the NSF grant DMS-1101337.}

\author{Ra\'{u}l Quiroga-Barranco}
\email{quiroga@cimat.mx}
\address{Centro de Investigaci\'{o}n en Matem\'{a}ticas, Guanajuato, Guanajuato, Mexico.}
\thanks{R. Quiroga-Barranco was supported by SNI and the Conacyt Grant 166891.}

\begin{abstract}
    For any given bounded symmetric domain, we prove the existence of commutative $C^*$-algebras generated by Toeplitz operators acting on any weighted Bergman space. The symbols of the Toeplitz operators that generate such algebras are defined by essentially bounded functions invariant under suitable subgroups of the group of biholomorphisms of the domain. These subgroups include the maximal compact groups of biholomorphisms. We prove the commutativity of the Toeplitz operators by considering the Bergman spaces as the underlying space of the holomorphic discrete series and then applying known multiplicity-free results for restrictions to certain subgroups of the holomorphic discrete series. In the compact case we completely characterize the subgroups that define invariant symbols that yield commuting Toeplitz operators in terms of the multiplicity-free property.
\end{abstract}

\keywords{Toeplitz operator, Bergman space, holomorphic discrete series, multiplicity-free representation.}
\subjclass{Primary: 47B35, 22E46, Secondary: 32A36, 32M15, 22E45}

\maketitle

\section{Introduction}
The weighted Bergman spaces on bounded symmetric domains are a fundamental object in Analysis. They come equipped with a natural projection, the Bergman projection, determined by a reproducing kernel property. This structure allows to consider the so-called Toeplitz operators defined as a multiplier operator followed by the Bergman projection. The special role of Toeplitz operators is observed, for example, in their density in the space of all bounded operators in the strong operator topology (see~\cite{Englis}). It is thus a surprising fact that there are large commutative $C^*$-algebras generated by Toeplitz operators. The existence of such commutative algebras is a current topic of interest in Complex Analysis, as the references of this work show.

The commutative $C^*$-algebras generated by Toeplitz operators known to this date always have an associated distinguished geometry. The latter is given by a subgroup $H$ of the group of biholomorphic maps of the domain. More precisely, for certain choices of $H$ the $H$-invariant essentially bounded functions determine commuting Toeplitz operators. Up to this date this kind of phenomenon has been observed only for the unit ball $\B^n$ with $H$ a maximal Abelian subgroup of its biholomorphisms (see \cite{GQV,QVBall1,QVBall2}) and some variations (see \cite{BVQuasiRadial,QSProjective,QVReinhardt}) as well as for the natural translations of a tube type domain in the weightless case (see \cite{VasilevskiTube}). It is an open problem to find higher rank irreducible bounded symmetric domains that admit for any weight large families of commuting Toeplitz operators acting on their Bergman spaces.

On the other hand, the weighted Bergman spaces are also very important in Harmonic Analysis. In this setup, the Bergman spaces provide the underlying spaces of the holomorphic discrete series representations for noncompact simple Lie groups associated to bounded symmetric domains. The study of such representations is fundamental as part of the picture to understand the unitary representations of simple Lie groups. In particular, a very great deal of attention has been given to the holomorphic discrete series; see for example \cite{FarautKoranyi,HarishChandraIV,Wallach1} just to mention a few.

The representations in the holomorphic discrete series are all irreducible for the action of the whole group of biholomorphisms of the corresponding bounded symmetric domain. However, for a (proper closed) subgroup $H$ of all  biholomorphisms the irreducibility is lost to be replaced by a direct integral decomposition into classes of irreducible unitary representations of $H$. The study of the branching behavior for these representations and many other is a fundamental part of Harmonic Analysis. In particular, for the holomorphic discrete series representations there are very general results that provide some of the subgroups $H$ for which the restricted representation is multiplicity-free, i.e.~so that the classes of irreducible representations over $H$ appear with multiplicity $1$ in the direct integral decomposition. Some of the results of this sort most relevant to this work can be found in \cite{ADO,Kobayashi,OlafssonOrsted}.

The main goal of this work is to prove that subgroups defining multiplicity-free restrictions of the holomorphic discrete series yield commutative $C^*$-algebras generated by Toeplitz operators. To be more precise, we obtain commuting Toeplitz operators when their symbols are invariant under a subgroup with a multiplicity-free restriction of the holomorphic discrete series representation. That is the content of Theorem~\ref{thm:commToeplitz-multiplicityFree}, which holds for arbitrary bounded symmetric domains and for all weighted Bergman spaces.

With the known multiplicity-free results at our disposal we obtain many examples of such commutative $C^*$-algebras. One of our most fundamental results is Theorem~\ref{thm:commToeplitz-maxCompact} which establishes the commutativity of the $C^*$-algebra generated by Toeplitz operators whose symbols are invariant under a maximal compact subgroup of the biholomorphisms of the domain; this is a generalization to arbitrary bounded symmetric domains of the commutativity of the Toeplitz operators with radial symbols proved in \cite{GKVRadial}. We also prove in Theorem~\ref{thm:commToeplitz-symmPairs} that the $C^*$-algebra generated by Toeplitz operators is commutative whenever the symbols for the Toeplitz operators are precisely those left invariant by a symmetric subgroup; this includes all the subgroups whose Lie subalgebras are listed in the second and third column of Table~I. In Theorem~\ref{thm:commToeplitz-tube} we prove the commutativity of the Toeplitz operators whose symbols are invariant under the natural translations in the unbounded realization of a bounded symmetric domain of tube type.

Not only does this solve the problem of the existence of commutative $C^*$-algebras generated by Toeplitz operators for every irreducible bounded symmetric domain (exceptional domains included) on every weighted Bergman space, but it does so with an abundance of examples.

An important problem in the study of the commutative $C^*$-algebras generated by Toeplitz operators is to find an explicit way to simultaneously diagonalize the Toeplitz operators. With this respect, our Theorems~\ref{thm:commToeplitz-antihol} and \ref{thm:commToeplitz-tube} provide an explicit unitary isomorphism that allows to achieve this for many sets of symbols. This includes all the symbols that are invariant under subgroups whose Lie algebras are listed in the third column of Table~I.

Once this many commutative $C^*$-algebras generated by Toeplitz operators have been obtained, it is natural to ask about the classification problem for them. In this direction, our Theorem~\ref{thm:commToeplitz-compact} shows that for symbols invariant under a compact subgroup the problem is equivalent to that of finding the multiplicity-free restrictions: a compact subgroup yields invariant symbols with commuting Toeplitz operators if and only if the restriction to such compact subgroup of the holomorphic discrete series is multiplicity-free.

In general, the problem of finding all the multiplicity-free restrictions is very hard. However, there are some cases where this can be easily verified. Hence, in Example~\ref{ex:torusSU(n,m)} we prove that for the compact subgroup of diagonal matrices (which is maximal Abelian) in $\SU(2,2)$  the restriction of the holomorphic discrete series representation is not multiplicity-free, and so the corresponding $C^*$-algebra generated by Toeplitz operators is not commutative. As proved in \cite{QVBall1,QVBall2} to every maximal Abelian subgroup of $\SU(n,1)$ there corresponds invariant symbols that define commuting Toeplitz operators. Example~\ref{ex:torusSU(n,m)} thus shows that the latter behavior does not extend to higher rank domains.

As for the distinguished geometry observed for commuting Toeplitz operators established in previous works, we can say that such phenomenon is still present since all our subgroups preserve the (Riemannian symmetric) geometry of the bounded symmetric domains. On the other hand, all the examples given in \cite{QVBall1,QVBall2,QSProjective} of commuting Toeplitz operators for the case of the unit ball $\B^n$ and the complex projective space $\mathbb{P}^n(\C)$ are obtained from $n$-dimensional maximal Abelian subgroups whose orbits are (a.e.) $n$-dimensional, flat, Lagrangian and totally geodesic. However, in Theorem~\ref{thm:commToeplitzBn-nonflat} we show that the subgroup $\SO_0(n,1)$ defines invariant symbols that yield commuting Toeplitz operators so that one of the $\SO_0(n,1)$-orbits is $n$-dimensional, non-flat, Lagrangian and totally geodesic. This proves that the maximal Abelian subgroups are not enough to obtain all the $C^*$-algebras generated by Toeplitz operators coming from invariant symbols in the unit ball $\B^n$ even if we require the existence of a Lagrangian and totally geodesic $n$-dimensional orbit. With these new examples, the geometry of the orbits does not even have to be flat.

As for the organization of the paper, in Section~\ref{sec:BergmanSpaces} we introduce the Bergman spaces and their relationship with the holomorphic discrete series. In Section~\ref{sec:interToeplitz} we define the notions of Toeplitz and intertwining operators and observe their properties as they relate to symbols. Section~\ref{sec:multFree} states the basic facts and results on multiplicity-free restrictions that we need to prove Theorem~\ref{thm:commToeplitz-multiplicityFree}. Finally, Sections~\ref{sec:commToeplitz-symmPairs} and \ref{sec:commToeplitz-compact} develop the main results corresponding the case of symmetric pairs and compact subgroups. Also, several examples are given in all sections to clarify our results.

\section{Bergman spaces and the holomorphic discrete series}
\label{sec:BergmanSpaces}
Let $G$ be a connected noncompact simple Lie group with finite center whose Lie algebra has a Cartan decomposition $\gg = \gk + \gp$. We will assume that the center $\gc$ of $\gk$ is nontrivial and so $1$-dimensional. Let us denote by $K$ the maximal compact subgroup of $G$ whose Lie algebra is $\gk$. In particular, the symmetric space $G/K$ is Hermitian. We denote by $G^\C$ the simply connected Lie group with Lie algebra $\gg^\C$. Without loss of generality we can assume that $G$ is a subgroup of $G^\C$.

We proceed to recall the definition of the Bergman spaces on the symmetric space $G/K$. Then, we will relate Bergman spaces to the construction of the holomorphic discrete series of universal covering group $\widetilde{G}$ discussed in \cite{FarautKoranyi, HarishChandraIV, Wallach1}. For the details of our representation theoretic constructions we refer to \cite{FarautKoranyi, HelgasonBook, Satake, Wallach1}, and provide further explanations only when necessary for our arguments. We will mostly follow the Lie theoretic constructions from \cite{FarautKoranyi}.

Let us fix $Z_0 \in i\gc$ so that the eigenvalues of $\ad(Z_0)$ are $0,\pm 1$. Let $\gt \subset \gk$ be a Cartan subalgebra, so that $\gt$ is a Cartan subalgebra of $\gg$ as well. Note that $Z_0 \in i\gt$. For the Cartan subalgebra $\gt^\C$ of $\gg^\C$ we let $\Delta$ denote the corresponding root system. We denote by $\Delta_c$ and $\Delta_n$ the sets of compact and noncompact roots, respectively. Fix an order on the root system $\Delta$ with set of positive roots $\Delta^+$ so that
\[
    \Delta_n^+ = \Delta^+ \cap \Delta_n = \{\left. \alpha \,\right|\, \alpha(Z_0) = 1 \}.
\]
and denote $\Delta_n^- = -\Delta_n^+$. Consider the spaces
\[
    \gp_{\pm} = \sum_{\alpha \in \Delta_n^{\pm}} \gg_\alpha^\C
        = \gg^\C(\pm 1, \ad(Z_0)),
\]
which are Abelian subalgebras of $\gg^\C$ that are invariant under $\ad(\gk)$ and $\Ad(K)$. We denote by $P_+, K^\C, P_-$ the connected Lie subgroups of $G^\C$ whose Lie algebras are $\gp_+, \gk^\C, \gp_-$, respectively. Recall that the product map defines an injective holomorphic map $P_+ \times K^\C \times P_- \rightarrow G^\C$ whose image is an open dense subset of $G^\C$. In particular, for every $g$ in the image of the latter map we have a unique decomposition
\[
    g = p_+(g) k(g) p_-(g),
\]
where $p_\pm(g) \in P_\pm$ and $k(g) \in K^\C$.

We recall that $G \subset P_+ K^\C P_-$ and that $G \cap P_+ K^\C P_- = K$. In particular, we have the inclusion $G K^\C P_- \subset P_+ K^\C P_-$. This yields the Harish-Chandra realization of $G/K$ as a bounded symmetric domain in the complex vector space $\gp_+$ as the composition of the maps
\[
    G/K \simeq G K^\C P_-/K^\C P_- \hookrightarrow P_+ K^\C P_-/K^\C P_-
        \simeq P_+ \rightarrow \gp_+,
\]
which is explicitly given by the expression
\[
    gK \mapsto \log(p_+(g)).
\]
for every $g \in G$. Recall that the exponential map $\exp : \gp_+ \rightarrow P_+$ is a biholomorphism whose inverse is denoted by $\log : P_+ \rightarrow \gp_+$. In what follows, we will denote by $D$ the image in $\gp_+$ of the Harish-Chandra realization and consider the identification
\[
    G/K = D \subset \gp_+.
\]
Furthermore, through this identification we have a $G$-action, and so a $\widetilde{G}$-action as well, on $D$ by holomorphic diffeomorphisms given explicitly by the expression
\[
    g\cdot z = \log(p_+(g\exp(z)))
\]
for every $g \in G$ and $z \in D$.

Let $\mu$ be the normalized Lebesgue measure on $\gp_+$ so that $\mu(D) = 1$. We will denote by $\cH^2(D)$ the \textbf{(weightless) Bergman space} of $D$, which is defined as the (closed) subspace of $L^2(D,\mu)$ consisting of holomorphic functions on $D$. It is well known that $\cH^2(D)$ admits a reproducing kernel $k_D : D \times D \rightarrow \C$.

Recall that the genus of the domain $D$ is defined as
\[
    p = \frac{n + n_1}{r},
\]
where $n$ and $n_1$ are the complex dimensions of $D$ and of the maximal symmetric domain of tube type contained in $D$, respectively, and $r$ is the rank of $D$. Then, for every $\lambda > p - 1$ we define the measure
\[
    d\mu_\lambda(z) = c_\lambda k_D(z,z)^{1 - \frac{\lambda}{p}}dz,
\]
where $dz$ is the Lebesgue measure on $\gp_+$ and $c_\lambda$ is chosen so that $\mu_\lambda(D) = 1$. In particular, for $\lambda = p$ we obtain $\mu_p = \mu$, the normalized Lebesgue measure. The \textbf{weighted Bergman space} with weight $\lambda$ is denoted by $\cH^2_\lambda(D)$ and is defined as the (closed) subspace of $L^2(D, \mu_\lambda)$ consisting of holomorphic functions on $D$ (see \cite{Berezin, EnglisUpmeier}).

\begin{example}
    \label{ex:SU(n,m)}
    Let us consider the bounded symmetric domain $D_{n,m}^I$, where $n,m \geq 1$, associated to the simple Lie group $\SU(n,m)$. In this case we have
    \[
        Z_0 =
            \begin{pmatrix}
                \frac{m}{n+m} I_n & 0 \\
                0 & -\frac{n}{n+m} I_m
            \end{pmatrix}
    \]
    and also
    \begin{align*}
        \gp_+&=\left\{\left.
            \begin{pmatrix}
                0 & Z \\
                0 & 0
            \end{pmatrix}
                \,\right|\, Z\in M_{n\times m}(\C)\right\}\simeq M_{n\times m}(\C), \\
        \gp_-&=\left\{\left.
            \begin{pmatrix}
                0 & 0 \\
                W & 0
            \end{pmatrix}
                \,\right|\, W\in M_{m\times n}(\C)\right\}\simeq M_{m\times n}(\C), \\
        \gk&=\left\{\left.
            \begin{pmatrix}
                X & 0 \\
                0 & Y
            \end{pmatrix}
                \,\right|\,
                \begin{matrix}
                X\in \gu(n),\, Y \in \gu(m), \\
                \Tr X+\Tr Y =0
                \end{matrix}
                \right\} \simeq \gs(\gu(n)\times \gu(m)).
    \end{align*}
    Thus we obtain
    \begin{align*}
        P_+&=\left\{\left. \begin{pmatrix} I_n & Z\\ 0 & I_m\end{pmatrix}
            \,\right|\, Z\in M_{n\times m}(\C)\right\}, \\
        P_-&=\left\{\left. \begin{pmatrix} I_n & 0\\ W & I_m\end{pmatrix}
            \,\right|\, W\in M_{m\times n}(\C)\right\},
    \end{align*}
    and
    \begin{align*}
        K^\C &= \left\{\left.
            k(A,B)=
                \begin{pmatrix}
                    A& 0\\
                    0 & B
                \end{pmatrix}
            \, \right| \,
                \begin{matrix}
                    A\in \GL (n,\C),\, B\in \GL(m,\C) \\
                    \det A \det B=1
                \end{matrix}\right\} \\
        &\simeq S(\GL (n,\C)\times \GL (n,\C)) .
    \end{align*}
    From this, it is easy to verify that the decomposition of the elements in $P_+ K^\C P_-$ is given by the following expression
    \[
        \begin{pmatrix}
            A & B \\
            C & D
        \end{pmatrix} =
        \begin{pmatrix}
            I_n & BD^{-1} \\
            0 & I_m
        \end{pmatrix}
        \begin{pmatrix}
            A - BD^{-1}C & 0 \\
            0 & D
        \end{pmatrix}
        \begin{pmatrix}
            I_n & 0 \\
            D^{-1}C & 0
        \end{pmatrix}.
    \]
    Hence, for
    \[
        g =
        \begin{pmatrix}
            A & B \\
            C & D
        \end{pmatrix} \in \SU(n,m)
    \]
    we have
    \[
        g
        \begin{pmatrix}
            I_n & Z \\
            0 & I_m
        \end{pmatrix} =
        \begin{pmatrix}
            A & AZ + B \\
            C & CZ + D
        \end{pmatrix},
    \]
    and the previous decomposition yields
    \[
        p_+\left(
                g
        \begin{pmatrix}
            I_n & Z \\
            0 & I_m
        \end{pmatrix}
        \right) =
        \begin{pmatrix}
            I_n & (AZ + B)(CZ + D)^{-1} \\
            0 & I_m
        \end{pmatrix}.
    \]
    We conclude that the $\SU(n,m)$-action on $D_{n,m}^I$ is given by
    \begin{align*}
        \SU(n,m) \times D_{n,m}^I &\rightarrow D_{n,m}^I \\
        \begin{pmatrix}
            A & B \\
            C & D
        \end{pmatrix} \cdot Z &= (AZ + B)(CZ + D)^{-1}.
    \end{align*}
    In particular, the $K^\C$-action on $\gp_+$ is given by the expression
        \[
            \begin{pmatrix}
                A & 0 \\
                0 & B
            \end{pmatrix}
            \cdot Z=AZB^{-1}  .
        \]
\end{example}

Back to the case of a general bounded symmetric domain $D$, since the measure $\mu_\lambda$ is positive on nonempty open sets, by the same proof as in the weightless case, for every $z \in D$ the linear functional $f \mapsto f(z)$ is continuous on $\cH^2_\lambda(D)$. This ensures the existence of a reproducing kernel $k_\lambda : D \times D \rightarrow \C$ for $\cH^2_\lambda(D)$ known as the Bergman kernel (see \cite{HelgasonBook}). Moreover, the kernel $k_\lambda(z,w)$ is holomorphic in $z$ and anti-holomorphic in $w$. Hence, the \textbf{Bergman projection} defined by
\begin{align*}
    B_\lambda : L^2(D, \mu_\lambda) &\rightarrow \cH^2_\lambda(D) \\
        B_\lambda(f)(z) &= \int_D f(w) k_\lambda(z,w) d\mu_\lambda(w),
\end{align*}
fixes any $f \in \cH^2_\lambda(D)$ and vanishes on any $f \in \cH^2_\lambda(D)^\perp$. In particular, the Bergman projection is precisely the orthogonal projection onto $\cH^2_\lambda(D)$.

By Section~3 of \cite{FarautKoranyi}, for every $\lambda > p - 1$ the weighted Bergman space $\cH^2_\lambda(D)$ admits a unitary $\widetilde{G}$-action given by
\[
    f(z) \mapsto j(g^{-1},z)^\frac{\lambda}{p} f(g^{-1}z)
\]
where $g \in \widetilde{G}$ and $j(g,z)$ denotes the complex Jacobian at $z$ of the holomorphic transformation of $D$ defined by $g$. This defines the \textbf{holomorphic discrete series of $\widetilde{G}$} for $\lambda > p - 1$. Note that we need to consider an action of the group $\widetilde{G}$ for the expression $j(g,z)^\frac{\lambda}{p}$ to be a well-defined holomorphic function for arbitrary $\lambda > p - 1$.

Hence, we have the following result.

\begin{proposition} \label{prop:Bergman-as-discrete-series}
    The holomorphic discrete series of $\widetilde{G}$ for $\lambda > p - 1$ has representation space given by $\cH^2_\lambda(D)$, with the action
    \[
        \pi_\lambda(g)(f)(z) = j(g^{-1},z)^\frac{\lambda}{p} f(g^{-1}z)
    \]
    for $f \in \cH^2_\lambda(D)$, $z \in D$ and $g \in \widetilde{G}$. Furthermore, $\pi_\lambda$ is an irreducible unitary representation of $\widetilde{G}$ for every $\lambda > p - 1$.
\end{proposition}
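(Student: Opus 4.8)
The plan is to treat the three assertions in turn --- that the formula defines an action, that the action is unitary, and that it is irreducible --- leaning on the construction cited from \cite{FarautKoranyi} for the first two and on the highest-weight-module structure for the third. To see that $\pi_\lambda$ is a genuine representation, I would first record the chain-rule cocycle identity $j(gh,z) = j(g, h\cdot z)\, j(h,z)$, immediate from differentiating the composition $z \mapsto (gh)\cdot z = g\cdot(h\cdot z)$ of biholomorphisms of $D$. Since $j(g,z)$ is a nonvanishing holomorphic function of $z$, the only subtlety is that $j(g^{-1},z)^{\lambda/p}$ requires a holomorphic branch of the power, which is not single-valued in $g$ on $G$ for non-integral $\lambda/p$. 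Passing to $\widetilde G$ exactly resolves this, so that $g \mapsto j(g^{-1},\cdot)^{\lambda/p}$ is an honest $1$-cocycle and the homomorphism property $\pi_\lambda(gh) = \pi_\lambda(g)\pi_\lambda(h)$ holds; holomorphy of $\pi_\lambda(g)f$ and strong continuity are then routine.

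Next I would establish unitarity by change of variables, which is the computational core (alternatively one simply invokes Section~3 of \cite{FarautKoranyi}). Writing $\phi_g(z) = g\cdot z$, the real Jacobian of $\phi_g$ equals $|j(g,z)|^2$, and the weightless reproducing kernel obeys the standard transformation law $k_D(g\cdot z, g\cdot w)\, j(g,z)\,\overline{j(g,w)} = k_D(z,w)$; setting $w=z$ gives $k_D(g\cdot z, g\cdot z) = k_D(z,z)\,|j(g,z)|^{-2}$. Substituting $w = g^{-1}z$ in the integral defining $\|\pi_\lambda(g)f\|^2$, using the cocycle consequence $j(g^{-1}, g\cdot w) = j(g,w)^{-1}$ and $d\mu_\lambda(z) = c_\lambda k_D(z,z)^{1-\lambda/p}\,dz$, the total power of $|j(g,w)|$ collected from the three sources --- the factor $|j(g^{-1},z)|^{2\lambda/p}$, the transformed kernel, and the Lebesgue Jacobian --- is $-\tfrac{2\lambda}{p} - 2\bigl(1-\tfrac{\lambda}{p}\bigr) + 2 = 0$. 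Hence every Jacobian factor cancels and $\|\pi_\lambda(g)f\|^2 = \|f\|^2$, which simultaneously shows $\pi_\lambda(g)f \in \cH^2_\lambda(D)$ and that $\pi_\lambda(g)$ is unitary.

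Finally, for irreducibility I would exploit the highest-weight structure. Since $K$ fixes $0 \in D$ and acts linearly on $\gp_+$, the Jacobian $j(k,z)$ is independent of $z$, so the constant function $\mathbf 1$ spans a one-dimensional $\widetilde K$-type. In the Harish-Chandra coordinates $P_+$ acts by translations $z \mapsto z+v$, so the derived action of $\gp_+$ is by constant-coefficient holomorphic vector fields and annihilates $\mathbf 1$, making it a highest weight vector, while the derived action of $\gp_-$ raises the polynomial degree. The $\widetilde K$-finite vectors are exactly the holomorphic polynomials $\C[\gp_+]$, which are dense in $\cH^2_\lambda(D)$ and are generated from $\mathbf 1$ under $\gp_-$. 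Any nonzero closed invariant subspace $M$ is $\widetilde K$-invariant with finite-dimensional weight spaces and weights bounded above, hence contains a vector of maximal weight in $M$; such a vector is annihilated by $\gp_+$ and therefore lies in the one-dimensional highest weight space, giving $\mathbf 1 \in M$, and cyclicity of $\mathbf 1$ forces $M = \cH^2_\lambda(D)$.

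I expect this last step to be the main obstacle, since making the highest-weight argument rigorous for the entire range $\lambda > p-1$ --- including the analytic-continuation values below the classical holomorphic discrete series --- relies on the precise identification of the $\widetilde K$-type decomposition and on the density of polynomials in $\cH^2_\lambda(D)$. Rather than reprove these facts, I would invoke the classical results of \cite{HarishChandraIV, Wallach1, FarautKoranyi}, which supply both the polynomial density and the unitarizability of the corresponding highest weight module throughout $\lambda > p-1$.
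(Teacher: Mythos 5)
Your proposal is correct, but note that the paper contains no proof of this proposition at all: it simply restates the construction from Section~3 of \cite{FarautKoranyi} (the unitary $\widetilde{G}$-action $f(z)\mapsto j(g^{-1},z)^{\lambda/p}f(g^{-1}z)$ on $\cH^2_\lambda(D)$) and concludes ``Hence, we have the following result.'' What you have written is, in effect, an unfolding of that citation into the standard argument: the cocycle identity $j(gh,z)=j(g,h\cdot z)\,j(h,z)$ and its lift to $\widetilde{G}$ to make $j^{\lambda/p}$ single-valued; unitarity via the kernel transformation law $k_D(g\cdot z,g\cdot z)=k_D(z,z)\,|j(g,z)|^{-2}$ together with the exponent count $-\tfrac{2\lambda}{p}-2\bigl(1-\tfrac{\lambda}{p}\bigr)+2=0$; and irreducibility via the highest-weight structure on $\C[\gp_+]$. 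Two small corrections to your commentary. First, your closing worry about ``analytic-continuation values below the classical holomorphic discrete series'' is moot: for every $\lambda>p-1$ the measure $\mu_\lambda$ is finite and these are genuine (relative) holomorphic discrete series of $\widetilde{G}$; the analytic continuation concerns $\lambda\le p-1$, outside the proposition's range. Second, your irreducibility step quietly relies on the cyclicity claim $U(\gp_-)\mathbf{1}=\C[\gp_+]$, which for arbitrary real $\lambda>p-1$ is itself a nontrivial Faraut--Kor\'anyi-type fact (nonvanishing of the coefficients $(\lambda)_{\mathbf{m}}$ in the $K$-type norm computation); you can avoid invoking it by applying your minimal-degree (maximal-weight) argument to both a proper nonzero closed invariant subspace $M$ and to $M^\perp$, forcing $\mathbf{1}$ to lie in both, a contradiction. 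With that adjustment your argument is complete and self-contained modulo polynomial density, whereas the paper buys brevity by deferring everything to \cite{FarautKoranyi, HarishChandraIV, Wallach1}; your version has the advantage of making explicit exactly where passage to $\widetilde{G}$ and the restriction $\lambda>p-1$ are used.
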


\section{Intertwining Toeplitz operators}
\label{sec:interToeplitz}
Let $H$ be a Lie group and $\pi : H \rightarrow \cL(\cH)$ a unitary representation on a Hilbert space $\cH$. The set of operators on $\cH$ that intertwine the representation $\pi$ is denoted by $\End_H(\cH)$. More precisely, a bounded operator $T$ on $\cH$ belongs to $\End_H(\cH)$ if and only if we have
\[
    \pi(h) \circ T = T \circ \pi(h),
\]
for every $h \in H$. We observe that $\End_H(\cH)$ is a subalgebra of $\cL(\cH)$.

With the notation from the previous section, let $D$ be the Harish-Chandra realization of the Hermitian symmetric space associated to $G$. For every function $\varphi \in L^\infty(D)$ we define the \textbf{Toeplitz operator with symbol $\varphi$} corresponding to the weight $\lambda > p - 1$ as the bounded linear operator given by
\begin{align*}
    T^{(\lambda)}_\varphi : \cH^2_\lambda(D) &\rightarrow \cH^2_\lambda(D) \\
        T^{(\lambda)}_\varphi &= B_\lambda \circ M_\varphi,
\end{align*}
where, as usual, $M_\varphi$ denotes the multiplier operator on $L^2(D, \mu_\lambda)$ given by $\varphi$. In particular, we have
\[
    T^{(\lambda)}_\varphi(f)(z) = \int_D \varphi(w) f(w) k_\lambda(z,w) d\mu_\lambda(w),
\]
for every $f \in \cH^2_\lambda(D)$ and $z \in D$. It is interesting to note that
\[
    T^{(\lambda)}_\varphi = M_\varphi
\]
if $\varphi \in L^\infty(D)$ is holomorphic.

The following well known result can be found in \cite{VasilevskiBook}.

\begin{lemma}
\label{lem:injectiveToeplitz}
    For every $\lambda > p - 1$, the linear map
    \begin{align*}
        L^\infty(D) &\rightarrow \cL(\cH^2_\lambda(D)) \\
            \varphi &\mapsto T^{(\lambda)}_\varphi
    \end{align*}
    is injective.
\end{lemma}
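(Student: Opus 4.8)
The plan is to prove injectivity by verifying that the (manifestly linear) assignment $\varphi \mapsto T^{(\lambda)}_\varphi$ has trivial kernel: if $T^{(\lambda)}_\varphi = 0$ as an operator on $\cH^2_\lambda(D)$, then $\varphi = 0$ in $L^\infty(D)$. The central idea is to convert the single operator identity into a whole family of scalar integral conditions on $\varphi$, using that $B_\lambda$ is the orthogonal projection onto $\cH^2_\lambda(D)$ and is therefore self-adjoint and the identity on $\cH^2_\lambda(D)$. Concretely, for any $g, h \in \cH^2_\lambda(D)$ one computes
\[
    \langle T^{(\lambda)}_\varphi g, h \rangle = \langle B_\lambda M_\varphi g, h \rangle = \langle M_\varphi g, B_\lambda h \rangle = \langle M_\varphi g, h \rangle = \int_D \varphi(w) g(w) \overline{h(w)}\, d\mu_\lambda(w),
\]
so that $T^{(\lambda)}_\varphi = 0$ forces $\int_D \varphi\, g\, \overline{h}\, d\mu_\lambda = 0$ for all $g, h \in \cH^2_\lambda(D)$.

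Next I would feed a rich supply of test functions into this relation. Since $D$ is a bounded domain in $\gp_+ \cong \C^n$ and $\mu_\lambda$ is a finite (indeed probability) measure, every holomorphic monomial $w^\alpha$ is bounded on $D$ and hence belongs to $\cH^2_\lambda(D)$. Taking $g(w) = w^\alpha$ and $h(w) = w^\beta$ yields the vanishing of all mixed moments,
\[
    \int_D \varphi(w)\, w^\alpha\, \overline{w^\beta}\, d\mu_\lambda(w) = 0 \qquad \text{for all multi-indices } \alpha, \beta,
\]
and consequently $\int_D \varphi\, P\, d\mu_\lambda = 0$ for every polynomial $P$ in the coordinates $w$ and $\overline{w}$.

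The last step is an approximation argument. The algebra of polynomials in $w$ and $\overline{w}$ contains the constants, separates the points of the compact set $\overline{D}$, and is stable under complex conjugation, so by the Stone--Weierstrass theorem it is uniformly dense in $C(\overline{D})$. Because $\mu_\lambda$ is finite and $\varphi$ is essentially bounded, the functional $\psi \mapsto \int_D \varphi \psi\, d\mu_\lambda$ is continuous on $C(\overline{D})$ and hence vanishes on all of $C(\overline{D})$; by the uniqueness part of the Riesz representation theorem the complex measure $\varphi\, d\mu_\lambda$ is zero, so $\varphi = 0$ $\mu_\lambda$-almost everywhere. Finally, since the density $c_\lambda k_D(z,z)^{1-\lambda/p}$ of $\mu_\lambda$ is strictly positive on $D$, the measure $\mu_\lambda$ is equivalent to Lebesgue measure, and therefore $\varphi = 0$ in $L^\infty(D)$. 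The only genuinely delicate point is this closing passage: one must confirm that the monomials really lie in the weighted Bergman space (which is exactly where boundedness of $D$ and finiteness of $\mu_\lambda$ are used) and that the Stone--Weierstrass density can be coupled with the finiteness of $\mu_\lambda$ to reach arbitrary continuous test functions; the rest is a formal manipulation of the orthogonal projection.
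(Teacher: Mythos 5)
Your proposal is correct and follows essentially the same route as the paper: both pair $T^{(\lambda)}_\varphi$ against holomorphic monomials, use that $B_\lambda$ is the orthogonal projection to reduce to $\langle \varphi, \overline{z^\alpha} z^\beta \rangle = 0$, and conclude that $\varphi$ annihilates all polynomials in $w$ and $\overline{w}$. The only difference is cosmetic: where the paper simply cites density of real polynomials in $L^2(D,\mu_\lambda)$, you justify the final step via Stone--Weierstrass on $\overline{D}$ together with Riesz uniqueness and the equivalence of $\mu_\lambda$ with Lebesgue measure, which is a standard elaboration of the same argument.
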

\begin{proof}
Suppose that $\varphi\in L^\infty(D)$ and that $T^{(\lambda)}_\varphi = 0$ for some $\lambda$.  Then for each pair of holomorphic monomials $z^\alpha$ and $z^\beta$, we have that
\begin{align*}
   0 &=  \langle T^{(\lambda)}_\varphi z^\alpha, z^\beta \rangle \\
        &= \langle B_\lambda M_\varphi z^\alpha, z^\beta \rangle \\
        &= \langle M_\varphi z^\alpha, z^\beta \rangle \\
        &=  \langle \varphi, \overline{z^\alpha} z^\beta \rangle,
\end{align*}
from which it follows that $\varphi$ is orthogonal to all real polynomials on $D$. Since the real polynomials are dense in $L^2(D,\mu_\lambda)$, it follows that $\varphi=0$.
\end{proof}

For every function $\varphi$ defined on $D$ and $h \in G$ we denote by $\varphi_h$ the function given by $\varphi_h(z) = \varphi(h^{-1}z)$. This clearly defines both a $G$-action and a $\widetilde{G}$-action on the space $L^\infty(D)$. The following result relates this action to $\pi_\lambda$ and Toeplitz operators.

\begin{lemma}
\label{lem:intertwiningToeplitz}
    If $\lambda > p - 1$, then we have
    \[
        \pi_\lambda(h) \circ T^{(\lambda)}_\varphi = T^{(\lambda)}_{\varphi_h} \circ \pi_\lambda(h),
    \]
    for every $\varphi \in L^\infty(D)$ and $h \in \widetilde{G}$.
\end{lemma}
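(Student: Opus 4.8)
The plan is to verify the identity by direct computation, applying both sides to an arbitrary $f \in \cH^2_\lambda(D)$ and evaluating at a point $z \in D$, then unwinding the definitions until the two expressions coincide. The key structural fact that makes everything work is the cocycle property of the complex Jacobian: for $g_1, g_2 \in \widetilde{G}$ one has $j(g_1 g_2, z) = j(g_1, g_2 z)\, j(g_2, z)$, and hence also a transformation rule for the Bergman kernel and measure under the $\widetilde{G}$-action. I would first record these as the essential ingredients.

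The main computation proceeds as follows. Starting from the left-hand side, I would write
\[
    \bigl(\pi_\lambda(h) \circ T^{(\lambda)}_\varphi\bigr)(f)(z)
        = j(h^{-1}, z)^{\frac{\lambda}{p}} \bigl(T^{(\lambda)}_\varphi f\bigr)(h^{-1}z)
        = j(h^{-1}, z)^{\frac{\lambda}{p}} \int_D \varphi(w) f(w) k_\lambda(h^{-1}z, w)\, d\mu_\lambda(w).
\]
For the right-hand side I would first apply $\pi_\lambda(h)$ to $f$ and then the Toeplitz operator with the translated symbol, obtaining
\[
    \bigl(T^{(\lambda)}_{\varphi_h} \circ \pi_\lambda(h)\bigr)(f)(z)
        = \int_D \varphi(h^{-1}w)\, j(h^{-1}, w)^{\frac{\lambda}{p}} f(h^{-1}w)\, k_\lambda(z, w)\, d\mu_\lambda(w).
\]
In the second integral I would perform the change of variables $w \mapsto h\cdot w$ (using that $h$ acts on $D$ by a holomorphic diffeomorphism), which replaces $\varphi(h^{-1}w)$ by $\varphi(w)$, replaces $f(h^{-1}w)$ by $f(w)$, and introduces the appropriate Jacobian factors in the kernel and the measure. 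The goal is to see that after this substitution the integrand matches that of the left-hand side.

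The step I expect to be the main obstacle is keeping precise track of the Jacobian factors as they propagate through the change of variables, in particular the transformation law for $k_\lambda(z,w)$ and for $d\mu_\lambda(w)$ under the action of $h$. Concretely, I would need the identity expressing how the weighted Bergman kernel transforms, namely something of the shape
\[
    k_\lambda(h\cdot z, h\cdot w) = \overline{j(h,w)}^{-\frac{\lambda}{p}}\, j(h,z)^{-\frac{\lambda}{p}}\, k_\lambda(z,w),
\]
together with the matching fact that $d\mu_\lambda$ transforms by $|j(h,w)|^{\frac{2\lambda}{p}}$, so that the $\widetilde{G}$-action on $\cH^2_\lambda(D)$ is indeed unitary (this is exactly the content guaranteeing $\pi_\lambda$ is a unitary representation, as asserted in Proposition~\ref{prop:Bergman-as-discrete-series}). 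Once these transformation rules are in hand, the anti-holomorphic factor $\overline{j}^{-\lambda/p}$ from the kernel cancels against the holomorphic Jacobian coming from $d\mu_\lambda$, and the surviving holomorphic factor $j(h^{-1}, z)^{\lambda/p}$ pulls outside the integral via the cocycle relation, matching the left-hand side exactly. I would therefore spend most of the care on establishing these kernel and measure transformation formulas cleanly, after which the identification of the two integrands is a routine bookkeeping exercise.
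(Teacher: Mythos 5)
Your proposal is correct, but it takes a genuinely different route from the paper. The paper's proof is a short, soft operator-theoretic argument: it first verifies the pointwise identity $\pi_\lambda(h)\circ M_\varphi = M_{\varphi_h}\circ \pi_\lambda(h)$ (immediate from the definitions, with no kernels involved), and then establishes the lemma weakly, computing $\langle \pi_\lambda(h)\circ T^{(\lambda)}_\varphi f, g\rangle$ for $f,g\in\cH^2_\lambda(D)$ and using only that $B_\lambda$ is the orthogonal, hence self-adjoint, projection onto $\cH^2_\lambda(D)$ --- so it can be inserted or dropped whenever it pairs against a holomorphic vector --- together with unitarity of $\pi_\lambda$. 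Note that in that chain the step $\langle M_\varphi f, \pi_\lambda(h^{-1})g\rangle = \langle M_{\varphi_h}\circ\pi_\lambda(h) f, g\rangle$ is applied to the non-holomorphic vector $M_\varphi f$, so what is really used is that the defining formula for $\pi_\lambda(h)$ gives a unitary on all of $L^2(D,\mu_\lambda)$; that fact is equivalent to your transformation law $d\mu_\lambda(hw)=|j(h,w)|^{2\lambda/p}\,d\mu_\lambda(w)$, so both arguments rest on the same measure-theoretic input. What the paper's route buys is that it never needs the transformation law for the weighted kernel $k_\lambda$: self-adjointness of $B_\lambda$ replaces all kernel bookkeeping. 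Your route needs both identities you isolate, and in addition the cocycle relation must hold for the powers $j(\cdot,\cdot)^{\lambda/p}$ themselves, not merely for $j$; for non-integral $\lambda/p$ this is meaningful only for the canonical lifts to $\widetilde{G}$, which is exactly what the Faraut--Kor\'anyi construction cited in Section~\ref{sec:BergmanSpaces} provides, so it is available but should be stated explicitly (the kernel law itself then follows, e.g., from unitarity of $\pi_\lambda$ together with uniqueness of reproducing kernels). In exchange for this extra bookkeeping, your computation proves the identity pointwise on $D$ and makes the cancellation mechanism fully transparent, whereas the paper's proof is shorter and works verbatim without any explicit knowledge of $k_\lambda$.
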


\begin{proof}
First we note that $\pi_\lambda(h) \circ M_\varphi = M_{\varphi_h} \circ \pi_\lambda(h)$, for every $\varphi \in L^\infty(D)$ and $h \in \widetilde{G}$. In fact, for any $f\in \cH^2_\lambda(D)$, $h \in \widetilde{G}$ and $z \in D$, we have
\begin{align*}
    \pi_\lambda(h) M_\varphi f(z) & = \varphi(h^{-1}z) f(h^{-1}z) j(h^{-1},z)^\frac{\lambda}{p} \\
                   & = M_{\varphi_h} \pi_\lambda(h)f(z).
\end{align*}
Now using the fact that $B_\lambda$ is the orthogonal projection onto $\cH^2_\lambda(D)$ we have for every $f, g \in \cH^2_\lambda(D)$ and $h \in \widetilde{G}$
\begin{align*}
    \langle \pi_\lambda(h) \circ T^{(\lambda)}_\varphi f,g \rangle
        & = \langle B_\lambda \circ M_\varphi f, \pi_\lambda(h^{-1}) g \rangle \\
        & = \langle M_\varphi f, \pi_\lambda(h^{-1}) g \rangle \\
        & = \langle M_{\varphi_h} \circ \pi_\lambda(h) f, g \rangle \\
        & = \langle T^{(\lambda)}_{\varphi_h} \circ \pi_\lambda(h) f, g \rangle.
\end{align*}
Thus proving our claim.
\end{proof}

As a consequence, we obtain the following characterization of Toeplitz operators that intertwine the action of a subgroup of $\widetilde{G}$.

\begin{corollary}
\label{cor:intertwiningToeplitz}
    Suppose that $\widetilde{H}$ is a subgroup of $\widetilde{G}$. Then, the following conditions are equivalent for every $\varphi \in L^\infty(D)$.
    \begin{enumerate}
        \item The symbol $\varphi$ is $\widetilde{H}$-invariant: $\varphi(hz) = \varphi(z)$ for every $h \in \widetilde{H}$ and a.e.~$z \in D$.
        \item The Toeplitz operator $T^{(\lambda)}_\varphi$ intertwines the restriction $\pi_\lambda|_{\widetilde{H}}$ of $\pi_\lambda$ to $\widetilde{H}$: $\pi_\lambda(h) \circ T^{(\lambda)}_\varphi = T^{(\lambda)}_\varphi \circ \pi_\lambda(h)$ for every $h \in \widetilde{H}$.
    \end{enumerate}
\end{corollary}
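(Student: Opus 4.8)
The plan is to obtain both implications as immediate consequences of Lemma~\ref{lem:intertwiningToeplitz} together with the injectivity established in Lemma~\ref{lem:injectiveToeplitz}. The central observation is that the intertwining identity $\pi_\lambda(h) \circ T^{(\lambda)}_\varphi = T^{(\lambda)}_{\varphi_h} \circ \pi_\lambda(h)$ converts the geometric action of $\widetilde{H}$ on symbols into an algebraic relation between operators, and that each $\pi_\lambda(h)$ is invertible (being unitary). So the whole argument reduces to substituting and cancelling.

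For the direction $(1) \Rightarrow (2)$, I would first record that, since $\widetilde{H}$ is a subgroup, $\widetilde{H}$-invariance of $\varphi$ means precisely that $\varphi_h = \varphi$ in $L^\infty(D)$ for every $h \in \widetilde{H}$; indeed $\varphi_h(z) = \varphi(h^{-1}z) = \varphi(z)$ for a.e.\ $z$ because $h^{-1} \in \widetilde{H}$. Substituting $\varphi_h = \varphi$ into the identity of Lemma~\ref{lem:intertwiningToeplitz} immediately yields $\pi_\lambda(h) \circ T^{(\lambda)}_\varphi = T^{(\lambda)}_\varphi \circ \pi_\lambda(h)$, which is condition (2).

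For the direction $(2) \Rightarrow (1)$, I would combine the hypothesis $\pi_\lambda(h) \circ T^{(\lambda)}_\varphi = T^{(\lambda)}_\varphi \circ \pi_\lambda(h)$ with Lemma~\ref{lem:intertwiningToeplitz} to obtain $T^{(\lambda)}_{\varphi_h} \circ \pi_\lambda(h) = T^{(\lambda)}_\varphi \circ \pi_\lambda(h)$; cancelling the invertible operator $\pi_\lambda(h)$ on the right gives $T^{(\lambda)}_{\varphi_h} = T^{(\lambda)}_\varphi$. Applying the injectivity from Lemma~\ref{lem:injectiveToeplitz} then forces $\varphi_h = \varphi$, i.e.\ $\varphi(h^{-1}z) = \varphi(z)$ for a.e.\ $z$; since $h$ ranges over the whole subgroup $\widetilde{H}$, so does $h^{-1}$, and this is exactly the $\widetilde{H}$-invariance asserted in condition (1).

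Since every step is a direct invocation of a previously established fact, I do not expect a genuine obstacle here; the only point requiring care is the bookkeeping between $h$ and $h^{-1}$ and the handling of the almost-everywhere qualifiers, both of which are dispatched by the group structure of $\widetilde{H}$ (so that passing between $\varphi(hz)$ and $\varphi(h^{-1}z)$ is harmless) and by the fact, used already in Lemma~\ref{lem:injectiveToeplitz}, that equality of Toeplitz operators encodes equality of symbols as $L^\infty$-classes.
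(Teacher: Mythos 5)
Your proof is correct and takes essentially the same route as the paper: both arguments combine the identity $\pi_\lambda(h)\circ T^{(\lambda)}_\varphi = T^{(\lambda)}_{\varphi_h}\circ \pi_\lambda(h)$ of Lemma~\ref{lem:intertwiningToeplitz} with the injectivity of $\varphi \mapsto T^{(\lambda)}_\varphi$ from Lemma~\ref{lem:injectiveToeplitz}, cancelling the unitary $\pi_\lambda(h)$ to reduce the intertwining condition to $T^{(\lambda)}_{\varphi_h} = T^{(\lambda)}_\varphi$ and hence to $\varphi_h = \varphi$ for all $h \in \widetilde{H}$. The only difference is presentational: the paper phrases this as a single chain of equivalences, whereas you write out the two implications separately, with slightly more care about the $h \leftrightarrow h^{-1}$ bookkeeping.
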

\begin{proof}
    By Lemma~\ref{lem:intertwiningToeplitz}, we have that $T_\varphi$ is an intertwining operator for $\pi_\lambda|_{\widetilde{H}}$ if and only if $T_\varphi = T_{\varphi_h}$ for all $h\in \widetilde{H}$. By Lemma~\ref{lem:injectiveToeplitz}, this is equivalent to $\varphi=\varphi_h$ for all $h\in \widetilde{H}$.
\end{proof}

\section{Multiplicity-free restrictions}
\label{sec:multFree}
Let $H$ be a Lie group of type I in the sense of von Neumann algebras and let $\pi : H \rightarrow \cL(\cH)$ be a unitary representation of $H$ on a Hilbert space $\cH$. By Mautner's Theorem we have a unique decomposition
\[
    \pi \simeq \int_{\widehat{H}} m_\pi(\rho)\; \rho\; d\nu(\rho)
\]
where $d\nu(\rho)$ is some Borel measure on the space $\widehat{H}$ of equivalence classes of irreducible unitary representations of $H$, and $m_\pi : \widehat{H} \rightarrow \N \cup \{\infty\}$ is the multiplicity function which is $\nu$-a.e.~unique. Given this setup, we say that the representation $\pi$ is \textbf{multiplicity-free} if the function $m_\pi$ is a characteristic function, in other words, $\nu$-a.e.~it is either $0$ or $1$.

We have the following alternative characterization of a multiplicity-free representation.

\begin{proposition}
    \label{prop:typeImultiplicityFree}
    Let $H$ be a Lie group of type I in the sense of von Neumann algebras and let $\pi : H \rightarrow \cL(\cH)$ be a unitary representation of $H$ on a Hilbert space $\cH$. Then, $\pi$ is multiplicity-free if and only if $\End_H(\cH)$ is commutative.
\end{proposition}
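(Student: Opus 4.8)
The plan is to prove both directions via the standard correspondence between the structure of a representation and its commutant (the von Neumann algebra of intertwining operators), using the direct integral decomposition furnished by Mautner's Theorem. The key fact is that for a unitary representation of a type I group, the commutant $\End_H(\cH)$ decomposes compatibly with the direct integral, and commutativity of the commutant is detected fiberwise.

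First I would establish the easy direction: if $\pi$ is \emph{not} multiplicity-free, then $\End_H(\cH)$ is noncommutative. If $m_\pi(\rho) \geq 2$ on a set of positive $\nu$-measure, then $\pi$ contains a subrepresentation unitarily equivalent to $\rho \oplus \rho$ for some irreducible $\rho$ (restricting to a fiber or a suitable measurable subset). On this subrepresentation the commutant contains a copy of the full matrix algebra $M_2(\C)$ acting by scalars within each $\rho$-isotypic block — concretely, the intertwiners between the two copies of $\rho$, which by Schur's Lemma are parametrized by $\C$, generate together with the identity a noncommutative algebra. Composing with the projection onto this subrepresentation (itself an intertwiner) embeds $M_2(\C)$, or at least two non-commuting intertwiners, into $\End_H(\cH)$, so the commutant is noncommutative.

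Next I would treat the converse: if $\pi$ is multiplicity-free, then $\End_H(\cH)$ is commutative. Writing $\pi \simeq \int_{\widehat H}^\oplus \rho \, d\nu(\rho)$ with multiplicity at most one, the theory of decomposable operators (von Neumann, Dixmier) identifies the commutant of $\pi(H)$ with the algebra of decomposable operators that act as intertwiners in each fiber. By Schur's Lemma, $\End_H(\cH_\rho) = \C\,\mathrm{Id}$ for each irreducible $\rho$, so every element of $\End_H(\cH)$ is a decomposable operator of the form $\int_{\widehat H}^\oplus c(\rho)\,\mathrm{Id}_\rho \, d\nu(\rho)$ for some essentially bounded measurable scalar function $c$. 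Such diagonalizable operators commute with one another since scalar multiplication in each fiber is commutative; hence $\End_H(\cH)$ is abelian, in fact isomorphic to $L^\infty(\widehat H, \nu)$.

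The main obstacle is the careful invocation of direct integral theory: one must justify that the commutant of the decomposable von Neumann algebra generated by $\pi(H)$ consists exactly of the diagonalizable operators, which rests on the type I hypothesis (ensuring $\pi(H)'' $ is type I and the central decomposition behaves well) and on the measurable selection of the Schur intertwiners. I would cite the standard references on the decomposition of type I von Neumann algebras and representations rather than reprove them, and would take care that the multiplicity-free condition precisely matches the scalar structure of each fiber's commutant. The remaining arguments — Schur's Lemma and the commutativity of fiberwise scalar operators — are routine once the decomposable-operator framework is in place.
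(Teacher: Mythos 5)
The paper offers no proof of this proposition: it is stated as a standard consequence of direct integral theory, with Mautner's theorem invoked just before. So your proposal should be measured against the textbook argument, which it essentially reproduces. Your converse direction is sound: for a multiplicity-free decomposition over $\widehat{H}$ the fibers are irreducible and pairwise inequivalent, and the type I hypothesis (standardness of the Mackey Borel structure on $\widehat{H}$) is what guarantees that every element of $\End_H(\cH)$ is decomposable, hence by Schur's Lemma of the form $\int_{\widehat{H}}^\oplus c(\rho)\,\mathrm{Id}_\rho\, d\nu(\rho)$, giving $\End_H(\cH)\simeq L^\infty(\widehat{H},\nu)$. Deferring the decomposability of the commutant to the standard references is appropriate; note that \emph{disjointness} of the fibers is the essential ingredient there, since already for $\rho\oplus\rho$ viewed as a direct integral over a two-point space the swap operator lies in the commutant but is not decomposable.

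The one step that fails as written is in the easy direction: from $m_\pi\geq 2$ on a set of positive $\nu$-measure you cannot conclude that $\pi$ contains a subrepresentation equivalent to $\rho\oplus\rho$ with $\rho$ \emph{irreducible}. If $\nu$ is non-atomic (the typical situation for the restrictions studied in this paper, e.g.\ to noncompact $H$), then $\pi$ has no irreducible subrepresentations at all, and ``restricting to a fiber'' produces nothing, since single points have measure zero. The repair is the one your parenthetical gestures at: choose a measurable $E\subseteq\{m_\pi\geq 2\}$ with $\nu(E)>0$ and set $\sigma=\int_E^\oplus\rho\, d\nu(\rho)$; then $\pi$ contains a subrepresentation equivalent to $\sigma\oplus\sigma$, and the matrix units $\left(\begin{smallmatrix}0&1\\0&0\end{smallmatrix}\right)$ and $\left(\begin{smallmatrix}0&0\\1&0\end{smallmatrix}\right)$ --- that is, the obvious intertwiners carrying one copy of $\sigma$ to the other --- extended by zero to the orthogonal complement (which is invariant, the representation being unitary) give two non-commuting elements of $\End_H(\cH)$. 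Schur's Lemma is neither available here (as $\sigma$ is not irreducible) nor needed: two equivalent copies of any subrepresentation suffice. With this substitution your proof is complete and is precisely the standard argument the paper implicitly relies on.
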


The next result establishes the existence of commutative $C^*$-algebras generated by Toeplitz operators from multiplicity-free restrictions of the holomorphic discrete series representations of $\widetilde{G}$. From now on, for $\cA \subset L^\infty(D)$ a set of bounded symbols we will denote by $\cT^{(\lambda)}(\cA)$ the subalgebra of $\cL(\cH^2_\lambda(D))$ generated by the set of Toeplitz operators $\{ T^{(\lambda)}_\varphi : \varphi \in \cA \}$. Note that if $\cA$ is invariant under conjugation, then $\cT^{(\lambda)}(\cA)$ is in fact a $C^*$-algebra. Here, and the rest of this work, for $H$ a subgroup of $G$, we will denote by $\widetilde{H}$ the inverse image of $H$ with respect to the universal covering map $\widetilde{G} \rightarrow G$.

\begin{theorem}
    \label{thm:commToeplitz-multiplicityFree}
    Let $H$ be a closed subgroup of $G$ and let us denote by $\cA^H$ the subspace of $L^\infty(D)$ that consists of the $H$-invariant bounded symbols on $D$. If for some $\lambda > p - 1$ the algebra $\End_{\widetilde{H}}(\cH^2_\lambda(D))$ is commutative, then $\cT^{(\lambda)}(\cA^H)$ is a commutative $C^*$-algebra. In particular, the result holds if $H$ is a type I group, in the sense of von Neumann algebras, and the restriction $\pi_\lambda|_{\widetilde{H}}$ is multiplicity-free.
\end{theorem}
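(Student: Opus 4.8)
The plan is to realize $\cT^{(\lambda)}(\cA^H)$ as a subalgebra of the commutative algebra $\End_{\widetilde{H}}(\cH^2_\lambda(D))$, using Corollary~\ref{cor:intertwiningToeplitz} to pass from invariant symbols to intertwining operators, and Proposition~\ref{prop:typeImultiplicityFree} to handle the final ``in particular'' clause. The main statement hypothesizes commutativity of $\End_{\widetilde{H}}(\cH^2_\lambda(D))$ directly, so the core of the argument is essentially an inclusion of algebras; the type I assumption is needed only for the last sentence.

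First I would reconcile the two notions of invariance. The $\widetilde{G}$-action on $D$ is induced from the $G$-action through the covering projection $\widetilde{G} \rightarrow G$, so the kernel of this projection acts trivially on $D$. Since $\widetilde{H}$ is by definition the preimage of $H$, a symbol $\varphi \in L^\infty(D)$ is $H$-invariant if and only if it is $\widetilde{H}$-invariant. Hence $\cA^H$ coincides with the space of $\widetilde{H}$-invariant bounded symbols, and Corollary~\ref{cor:intertwiningToeplitz} applies verbatim to each $\varphi \in \cA^H$.

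Next, by Corollary~\ref{cor:intertwiningToeplitz} every $\varphi \in \cA^H$ produces a Toeplitz operator $T^{(\lambda)}_\varphi$ that intertwines $\pi_\lambda|_{\widetilde{H}}$, that is, lies in $\End_{\widetilde{H}}(\cH^2_\lambda(D))$. Therefore the generating set $\{ T^{(\lambda)}_\varphi : \varphi \in \cA^H \}$ is contained in $\End_{\widetilde{H}}(\cH^2_\lambda(D))$, which is a subalgebra of $\cL(\cH^2_\lambda(D))$ and is commutative by hypothesis. Consequently the subalgebra $\cT^{(\lambda)}(\cA^H)$ that it generates sits inside this commutative algebra and is itself commutative. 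To obtain a $C^*$-algebra I would observe that $\cA^H$ is stable under complex conjugation, since $\overline{\varphi}$ is $H$-invariant whenever $\varphi$ is; the remark preceding the theorem, together with the identity $(T^{(\lambda)}_\varphi)^* = T^{(\lambda)}_{\overline{\varphi}}$, then upgrades $\cT^{(\lambda)}(\cA^H)$ to a $C^*$-algebra.

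For the last assertion, under the additional hypotheses I would apply Proposition~\ref{prop:typeImultiplicityFree} to the group $\widetilde{H}$ and the representation $\pi_\lambda|_{\widetilde{H}}$: multiplicity-freeness forces $\End_{\widetilde{H}}(\cH^2_\lambda(D))$ to be commutative, which is precisely the hypothesis already treated. The step demanding the most care is this passage to $\widetilde{H}$, since the proposition requires $\widetilde{H}$ to be of type I whereas the assumption is phrased for $H$. I would resolve this by noting that $\widetilde{H}$ is a central extension of $H$ by the discrete group $\ker(\widetilde{G} \rightarrow G)$ and checking, or citing, that this extension preserves the type I property. Everything else reduces to a direct chaining of the results established earlier in the paper.
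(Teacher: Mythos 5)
Your proof is correct and takes essentially the same route as the paper's own: identify $\cA^H$ with $\cA^{\widetilde{H}}$, use Corollary~\ref{cor:intertwiningToeplitz} to place the generating Toeplitz operators inside $\End_{\widetilde{H}}(\cH^2_\lambda(D))$, which is commutative by hypothesis, and invoke Proposition~\ref{prop:typeImultiplicityFree} for the multiplicity-free clause. Your added care about conjugation-stability of $\cA^H$ (for the $C^*$ property) and about transferring the type I hypothesis from $H$ to $\widetilde{H}$ only makes explicit details the paper leaves implicit.
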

\begin{proof}
    Since $\{ T^{(\lambda)}_\varphi : \varphi \in \cA^H \}$ generates the $C^*$-algebra $\cT^{(\lambda)}(\cA^H)$ and $\cA^H = \cA^{\widetilde{H}}$, it is enough to prove that $T^{(\lambda)}_\varphi$ and $T^{(\lambda)}_\psi$ commute for every pair of symbols $\varphi, \psi \in \cA^{\widetilde{H}}$. By Corollary~\ref{cor:intertwiningToeplitz}, the $\widetilde{H}$-invariance of any given $\varphi, \psi \in \cA^{\widetilde{H}}$ implies that the corresponding Toeplitz operators $T^{(\lambda)}_\varphi$ and $T^{(\lambda)}_\psi$ both belong to $\End_{\widetilde{H}}(\cH^2_\lambda(D))$ which is commutative by hypothesis.
\end{proof}

\section{Commuting Toeplitz operators: symmetric pair case}
\label{sec:commToeplitz-symmPairs}
There are a number of general results showing that for the holomorphic discrete series representations $\pi_\lambda$ the restriction from $\widetilde{G}$ to suitable closed subgroups $H$ provides multiplicity-free representations $\pi_\lambda|_H$. The case we consider in this section is that of subgroups defining a symmetric pair.

We recall that a \textbf{symmetric pair} is given by a pair of Lie groups $(H_1, H_2)$, so that $H_1$ is connected semisimple, and an involutive automorphism $\tau$ of $H_1$ so that $H_2$ is an open subgroup of $H_1^\tau$, where the latter denotes the fixed point set of $\tau$.

For $G$ as before, let $\theta$ be the Cartan involution associated to the decomposition $\gg = \gk + \gp$. Let us consider a symmetric pair $(G,H)$ with involution $\tau$. Hence, we can always replace $\tau$ by a conjugate to assume that $\tau$ and $\theta$ commute. In particular, $\tau(\gk) = \gk$ and since $\dim(\gc) = 1$ we also have $\tau(Z_0) = \pm Z_0$. We say that the $(G,H)$ is a \textbf{holomorphic symmetric pair} when $\tau(Z_0) = Z_0$ holds, and that it is an \textbf{anti-holomorphic symmetric pair} when $\tau(Z_0) = -Z_0$ is satisfied.

Note that for any symmetric pair $(G,H)$, its involution $\tau$ defines an involution of the symmetric space $G/K$ that we denote by the same symbol.

In the holomorphic case, we necessarily have $\tau(\gp_+) = \gp_+$ and $\tau(\gp_-) = \gp_-$, so that the restriction of $\tau$ to $D \subset \gp_+$ yields a holomorphic involution of $D$. Also, the fixed point set of $\tau$ in $D$ satisfies
\[
    D^\tau = H/H\cap K,
\]
with respect to the Harish-Chandra realization of $D$. Furthermore, the space $D^\tau$ is a bounded symmetric domain in $\gp_+ \cap \gh^\C$.

Now suppose that $(G,H)$ is an anti-holomorphic symmetric pair, and let $\sigma$ denote the conjugation of $\gg^\C$ with respect to $\gg$ as well as the induced conjugation on $G^\C$. Consider the anti-linear involution $\eta = \sigma \circ \tau = \tau \circ \sigma$. Since $Z_0 \in i \gc \subset i\gg$ we have $\eta(Z_0) = Z_0$, thus implying that the induced involution on $G^\C$ preserves the decomposition $P_+ K^\C P_-$. More precisely, we have
\[
    \eta(g) = \exp(\eta(Z)) \eta(k) \exp(\eta(W))
\]
for every $g \in G$ given by $g = \exp(Z) k \exp(W)$ with $Z \in \gp_+$, $k \in K^\C$ and $W \in \gp_-$. In particular, $\eta$ restricts to an anti-linear involution on $\gp_+$. On the other hand, since $\tau|_G = \eta|_G$ we conclude that $\tau$ induces an involution of $G/K$, that we will denote by the same symbol, so that with respect to the Harish-Chandra identification $G/K = D$ we have
\[
    \left(G/K\right)^\tau = D \cap \gp_+^\eta.
\]
Hence, if we denote $D^\tau = D \cap \gp_+^\eta$, then $D^\tau$ is a totally real submanifold of $D$. In this case we have
\[
    D^\tau = H/H\cap K,
\]
with respect to the Harish-Chandra identification for $D$. Also, $D^\tau$ is a Riemannian symmetric space and a real bounded symmetric domain with the restriction of the Bergman metric in $D$. We refer to \cite{HilgertOlafsson} for details on the structure of the domain $D^\tau$.

Table I lists the simple Lie algebras corresponding to bounded symmetric domains and the subalgebras that define non-Riemannian symmetric pairs. We present in separate columns the cases for $D^\tau$ complex and totally real submanifold of $D$. The classification of $D^\tau$ complex is taken from \cite[Table 3.4.1,p.65]{Kobayashi} (note that $\so (10)\times \so (2)\subset \gf_{6(-14)}$ and $\gf_{6(-78)}\times \so (2)\subset \gf_{7(-25)}$ should be removed from that table). The symmetric Lie algebras corresponding to totally real $D^\tau$ is taken from \cite[p.89]{HilgertOlafsson} (see also \cite{Ol1991}).

In \cite{Kobayashi} it is presented a very general theory showing that the restriction of highest weight representations are multiplicity-free. Based on our previous results, this allows to obtain the following conclusion.

\begin{theorem}
    \label{thm:commToeplitz-symmPairs}
    If $(G,H)$ is a symmetric pair, then the $C^*$-algebra $\cT^{(\lambda)}(\cA^H)$ is commutative on the weighted Bergman space $\cH^2_\lambda(D)$ for every $\lambda > p - 1$. In particular, the conclusion holds for $(G,K)$ and for every symmetric pair $(G,H)$ so that $(\gg, \gh)$ is a pair listed in Table I.
\end{theorem}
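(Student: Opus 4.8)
The plan is to deduce the theorem from Theorem~\ref{thm:commToeplitz-multiplicityFree}, which reduces the whole problem to showing that the restriction $\pi_\lambda|_{\widetilde{H}}$ is multiplicity-free. Concretely, for a symmetric pair $(G,H)$ I would verify the two hypotheses appearing in the ``in particular'' clause of that theorem: that $\widetilde{H}$ is of type I, and that $\pi_\lambda|_{\widetilde{H}}$ is multiplicity-free for every $\lambda > p-1$. The first point is the easy half: a symmetric subgroup $H$ is, up to open subgroups, the fixed-point group of an involution of the semisimple group $G$, hence reductive, and reductive Lie groups together with their connected coverings are of type I; so $\widetilde{H}$ qualifies and Proposition~\ref{prop:typeImultiplicityFree} becomes available.

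The real content is the multiplicity-freeness, and here I would split along the dichotomy introduced before the theorem. In the holomorphic case $\tau(Z_0)=Z_0$ the involution preserves $\gp_+$, so $(G,H)$ is a symmetric pair of holomorphic type; since $\pi_\lambda$ is by Proposition~\ref{prop:Bergman-as-discrete-series} an (irreducible unitary) highest weight representation of $\widetilde{G}$, the general multiplicity-free theorem for restrictions of unitary highest weight modules to symmetric subgroups of holomorphic type \cite{Kobayashi} applies and gives that $\pi_\lambda|_{\widetilde{H}}$ is multiplicity-free. This case already contains the Riemannian pair $(G,K)$, since $Z_0 \in i\gc \subset i\gk$ forces $\theta(Z_0)=Z_0$, and it contains every pair in the middle column of Table~I. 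In the anti-holomorphic case $\tau(Z_0)=-Z_0$ the fixed-point set $D^\tau$ is totally real and the branching of $\pi_\lambda|_{\widetilde{H}}$ is a genuine direct integral over a real form; here I would instead invoke the multiplicity-free results for restrictions to real forms of \cite{OlafssonOrsted,ADO}, which cover the pairs in the third column of Table~I.

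The hardest part will not be the abstract reduction but the careful matching of hypotheses. One must check that the cited theorems, normally stated for the groups on which the highest weight module is defined, transfer to the covering group $\widetilde{G}$ and its subgroup $\widetilde{H}$; since multiplicity refers only to the direct-integral decomposition and is unaffected by passing to covers, this should be routine but must be made explicit. More delicate is the anti-holomorphic case, where the multiplicity-free conclusion has to be read in the direct-integral sense of Section~\ref{sec:multFree} rather than as a discrete branching law, so that one genuinely concludes the commutativity of the commutant $\End_{\widetilde{H}}(\cH^2_\lambda(D))$ through Proposition~\ref{prop:typeImultiplicityFree} in the absence of a discrete decomposition. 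Once both cases deliver a multiplicity-free restriction, Theorem~\ref{thm:commToeplitz-multiplicityFree} at once yields that $\cT^{(\lambda)}(\cA^H)$ is commutative, establishing the claim for $(G,K)$ and for all the pairs listed in Table~I.
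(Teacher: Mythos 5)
Your proposal is correct and reaches the theorem by the same overall reduction as the paper, namely through Theorem~\ref{thm:commToeplitz-multiplicityFree}; but the multiplicity-free input is obtained by a genuinely different route. The paper's proof is a single uniform citation: Theorem~A of \cite{Kobayashi} gives that the restriction to $\widetilde{H}$ of the irreducible unitary highest weight representations in question is multiplicity-free for \emph{every} symmetric pair $(G,H)$, with no case distinction between holomorphic and anti-holomorphic type. Your reading of Kobayashi as covering only pairs of holomorphic type is too pessimistic: the holomorphic/anti-holomorphic dichotomy matters there for highest weight modules of general type, but the $\pi_\lambda$ of Proposition~\ref{prop:Bergman-as-discrete-series} are of \emph{scalar type} (the minimal $K$-type, spanned by the constant function, is one-dimensional), and for scalar type the multiplicity-free theorem applies to all symmetric pairs, anti-holomorphic ones included. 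Your substitute argument in the anti-holomorphic case is nevertheless viable: the generalized Segal--Bargmann transform of \cite{OlafssonOrsted} identifies $\pi_\lambda|_{\widetilde{H}}$ with $L^2(D^\tau,\widehat{\mu})$, and the latter is multiplicity-free because $D^\tau = H/H\cap K$ is a Riemannian symmetric space --- but be aware that this last step is not contained in \cite{OlafssonOrsted} or \cite{ADO}; it is the Plancherel/Gelfand-pair fact that the paper cites from \cite{Helgason1970} and \cite{HarishChandraSpherical1,HarishChandraSpherical2} when it runs exactly this argument later, in the proof of Theorem~\ref{thm:commToeplitz-antihol}, to obtain the sharper diagonalization statement. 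Also, the citation of \cite{ADO} is misplaced in your anti-holomorphic branch: that reference concerns the $N^+$-translations on tube type domains and enters only in Theorem~\ref{thm:commToeplitz-tube}. What your approach buys is extra structure in the anti-holomorphic case (the explicit intertwiner, effectively anticipating Theorem~\ref{thm:commToeplitz-antihol}) together with an explicit verification of the type~I hypothesis and of the harmlessness of passing to the covers $\widetilde{G}$, $\widetilde{H}$, points the paper leaves implicit; what the paper's approach buys is brevity and uniformity, disposing of all symmetric pairs, and hence of $(G,K)$ and every pair in Table~I, in one stroke.
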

\begin{proof}
    Theorem~A from \cite{Kobayashi} implies that, since $(G,H)$ is a symmetric pair, the restriction to $\widetilde{H}$ of any irreducible unitary highest weight representation of $\widetilde{G}$ is multiplicity-free. From our discussion in Section~\ref{sec:BergmanSpaces} it follows that the representations of $\widetilde{G}$ on the Bergman spaces $\cH^2_\lambda(D)$ belong to the holomorphic discrete series and so are irreducible unitary highest weight representations. Hence, the result now follows from Theorem~\ref{thm:commToeplitz-multiplicityFree}.
\end{proof}

\begin{table}
\begin{center}
\begin{tabular}{|c|c|c|}
\hline
\multicolumn{3}{|c|}{}\\
\multicolumn{3}{|c|}{TABLE I}\\
\multicolumn{3}{|c|}{}\\
\hline
\hline
$\gg$ &
$D^\tau$ complex  &  $D^\tau$ totally real \\
\hline
$\su (n,m)$ & $\gs(\gu (i,j)\times \gu (n-i,m-j))$& $\so (n,m)$\\
$\su (2n,2m)$ & &$\gsp (n,m)$\\
\hline
$\su (n,n)$ & $\so^*(2n)$ & $\gsl (n,\C)\times \R$ \\
& $\gsp (n,\R)$ & \\
\hline
$\so^*(2n)$ &$\so^*(2i)\times \so^*(2(n-i))$ &$\so (n,\C)$ \\
&$ \gu (i,n-i)$& \\
$\so^*(4n)$& & $\su^*(2n)\times\R$\\
\hline
$\so (2,n)$ & $\so (2,i)\times \so (n-i)$& $\so (1,i)\times \so (1,n-i)$ \\
$\so(2,2n)$ & $\gu (1,n)$ & \\
\hline
$\gsp (n,\R)$ &$ \gu (i,n-i)$ & $\mathfrak{gl}(n,\R)$ \\
& $\gsp (i,\R)\times \gsp (n-i,\R)$ & \\
$\gsp (2n,\R)$ & & $\gsp (n,\C)$\\
\hline
$\ge_{6(-14)}$ & $\so^* (10)\times \so (2)$ &$\mathfrak{f}_{4(-20)}$ \\
&$\so (8,2)\times \so (2)$ & $\gsp (2,2)$\\
& $\su(5,1)\times \gsl (2,\R)$ & \\
&$\su(4,2)\times \su (2)$ & \\
\hline
$\ge_{7(-25)}$& $\ge_{6(-14)}\times \so(2)$ & $\ge_{6(-26)}\times \so (1,1)$ \\
& $\so (10,2)\times \gsl (2,\R)$ & $\su^*(8)$ \\
& $\so^*(12)\times \su (2)$&\\
& $\su (6,2)$& \\
\hline
\hline
\end{tabular}
\end{center}
\end{table}

Since $(G,K)$ is a (Riemannian) symmetric pair, we obtain the following result as a particular case.

\begin{theorem}
    \label{thm:commToeplitz-maxCompact}
    Let $\cA^K$ be the set of bounded symbols that are $K$-invariant. Then, the $C^*$-algebra $\cT^{(\lambda)}(\cA^K)$ is commutative on the weighted Bergman space $\cH^2_\lambda(D)$ for every $\lambda > p - 1$.
\end{theorem}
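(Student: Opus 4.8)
The plan is to treat this as the special case $H = K$ of the symmetric-pair theorem just established. First I would verify that $(G,K)$ is a symmetric pair in the paper's sense: $G$ is connected and simple, hence semisimple, and the Cartan involution $\theta$ attached to the decomposition $\gg = \gk + \gp$ is an involutive automorphism of $G$ whose fixed-point group is precisely $K$. Taking $\tau = \theta$, the pair $(G,K)$ satisfies the definition of a symmetric pair, so Theorem~\ref{thm:commToeplitz-symmPairs} applies verbatim and yields the commutativity of $\cT^{(\lambda)}(\cA^K)$ for every $\lambda > p - 1$. This is exactly the ``in particular'' clause already recorded in that theorem, so at this level the argument is a single invocation.

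Alternatively, to obtain a self-contained argument that does not route through Kobayashi's Theorem~A, I would verify the multiplicity-free hypothesis of Theorem~\ref{thm:commToeplitz-multiplicityFree} directly for $\widetilde{H} = \widetilde{K}$. The essential input is the classical Hua--Schmid decomposition of the holomorphic polynomials on $\gp_+$ as a $K$-module,
\[
    \cP(\gp_+) = \bigoplus_{\mathbf{m}} P_{\mathbf{m}},
\]
where $\mathbf{m}$ ranges over partitions with at most $r = \operatorname{rank}(D)$ parts and the $P_{\mathbf{m}}$ are pairwise inequivalent irreducible $K$-representations. Since $K$ acts linearly on $\gp_+$, the Jacobian factor $j(k^{-1},z)^{\lambda/p}$ is independent of $z$ and defines a character $\chi_\lambda$ of $\widetilde{K}$; hence $\pi_\lambda$ restricted to the dense subspace of polynomials is $\chi_\lambda \otimes (\text{geometric } K\text{-action})$, that is $\bigoplus_{\mathbf{m}} (\chi_\lambda \otimes P_{\mathbf{m}})$. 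Tensoring by the fixed character $\chi_\lambda$ preserves both irreducibility and pairwise inequivalence, so $\pi_\lambda|_{\widetilde{K}}$ is multiplicity-free and Theorem~\ref{thm:commToeplitz-multiplicityFree} again gives the claim.

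In the preferred first route there is essentially no obstacle beyond correctly identifying the Cartan involution as the relevant $\tau$, since all of the representation-theoretic content has already been absorbed into Theorem~\ref{thm:commToeplitz-symmPairs}. In the second route the only delicate point is the passage from the honest linear $K$-action on $\cP(\gp_+)$ to the cocycle-twisted $\widetilde{K}$-action defining $\pi_\lambda$: one must confirm that the Jacobian is genuinely constant in $z$ along $K$ (so that it contributes only a scalar on each $P_{\mathbf{m}}$) and that density of the polynomials lets one read off the $\widetilde{K}$-decomposition of the whole Hilbert space from that of $\cP(\gp_+)$. Neither difficulty is serious, which is consistent with the statement being presented as an immediate particular case of the symmetric-pair theorem.
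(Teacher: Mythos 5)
Your first route is precisely the paper's proof: the theorem is stated immediately after Theorem~\ref{thm:commToeplitz-symmPairs} with the one-line justification that $(G,K)$ is a (Riemannian) symmetric pair, i.e.\ $\tau=\theta$, so no separate argument is given there. Your second route is a genuinely different and correct alternative worth noting. It replaces Kobayashi's Theorem~A (which handles arbitrary symmetric pairs and all unitary highest weight modules) by the classical Hua--Schmid decomposition $\cP(\gp_+)=\bigoplus_{\mathbf{m}}P_{\mathbf{m}}$ into pairwise inequivalent irreducible $K$-modules, and your two flagged points are exactly the ones that need checking: since $K$ acts linearly on $\gp_+$, the Jacobian $j(k^{-1},z)=\det\bigl(\Ad(k^{-1})|_{\gp_+}\bigr)$ is indeed independent of $z$, so $\pi_\lambda|_{\widetilde{K}}$ is the linear action twisted by a single character $\chi_\lambda$ of $\widetilde{K}$, which preserves irreducibility and pairwise inequivalence; and since distinct $\widetilde{K}$-types are automatically orthogonal under the invariant inner product and the polynomials are dense in $\cH^2_\lambda(D)$ for $\lambda>p-1$, the Hilbert space is the orthogonal completion $\widehat{\bigoplus}_{\mathbf{m}}\left(\chi_\lambda\otimes P_{\mathbf{m}}\right)$, a discrete multiplicity-free decomposition. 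What this buys is a self-contained, elementary proof for the case $H=K$ that makes the $\widetilde{K}$-types completely explicit (useful, e.g., for diagonalizing the Toeplitz operators on each $P_{\mathbf{m}}$ by Schur's lemma, in the spirit of \cite{GKVRadial}), and it even sidesteps type-I considerations: commutativity of $\End_{\widetilde{K}}(\cH^2_\lambda(D))$ follows directly from Schur's lemma for a discrete multiplicity-free sum, which is the actual hypothesis of Theorem~\ref{thm:commToeplitz-multiplicityFree}. The cost is loss of generality: this argument is special to the maximal compact case, whereas the paper's route through Theorem~\ref{thm:commToeplitz-symmPairs} covers all symmetric pairs, including the noncompact ones in Table~I, at no extra effort.
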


\begin{remark}
    Theorem~\ref{thm:commToeplitz-maxCompact} provides, to the best of our knowledge, the first existence result of large commutative $C^*$-algebras generated by Toeplitz operators on bounded symmetric domains other than the unit ball. Furthermore, since there are many symmetric pairs $(G,H)$ with noncompact $H$ (see Table I) the list of commutative $C^*$-algebras provided by Theorem~\ref{thm:commToeplitz-symmPairs} is much larger than the one obtained from Theorem~\ref{thm:commToeplitz-maxCompact}.
\end{remark}

Let us now further consider the case where $(G,H)$ is an anti-holomorphic symmetric pair. The relevant results for this case are extensively studied in \cite{OlafssonOrsted} (see also \cite{Ol2000}). Such results allow us to conclude multiplicity-free restrictions from which the next result follows.

\begin{theorem}
    \label{thm:commToeplitz-antihol}
    If $(G,H)$ is an anti-holomorphic symmetric pair, then the $C^*$-algebra $\cT^{(\lambda)}(\cA^H)$ is commutative on the weighted Bergman space $\cH^2_\lambda(D)$ for every $\lambda > p - 1$. Furthermore, for every such $\lambda$ there is a unitary isomorphism $U_\lambda : \cH^2_\lambda(D) \rightarrow L^2(D^\tau, \widehat{\mu})$ so that the following is satisfied.
    \begin{enumerate}
        \item The measure $\widehat{\mu}$ is an $\widetilde{H}$-invariant measure in $D^\tau$ independent of $\lambda$.
        \item The map $U_\lambda$ is $\widetilde{H}$-intertwining for the action on $L^2(D^\tau, \widehat{\mu})$ given by $hf(x) = f(h^{-1}x)$.
        \item For every $\varphi \in \cA^H$ the operator $U_\lambda \circ T^{(\lambda)}_\varphi \circ U_\lambda^{-1}$ is a multiplier operator up to the Helgason-Fourier transform of $L^2(D^\tau, \widehat{\mu})$.
    \end{enumerate}
\end{theorem}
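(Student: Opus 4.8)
The plan is to treat the two assertions separately: the plain commutativity follows from the multiplicity-free branching of \cite{OlafssonOrsted} together with Theorem~\ref{thm:commToeplitz-multiplicityFree}, whereas the concrete content of (1)--(3) comes from realizing the abstract branching through the geometric restriction map to the totally real submanifold $D^\tau$. For commutativity I would argue as follows. Since $(G,H)$ is anti-holomorphic, the results of \cite{OlafssonOrsted} show that the restriction $\pi_\lambda|_{\widetilde{H}}$ of the holomorphic discrete series is multiplicity-free for every $\lambda > p-1$. As $H$ is reductive, hence of type I, Theorem~\ref{thm:commToeplitz-multiplicityFree} applies and yields that $\cT^{(\lambda)}(\cA^H)$ is a commutative $C^*$-algebra.

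For the unitary isomorphism I would take $U_\lambda$ to be the restriction-to-$D^\tau$ map, suitably normalized. Because $D^\tau = D \cap \gp_+^\eta$ is a totally real submanifold of maximal dimension, it is a set of uniqueness for holomorphic functions, so $f \mapsto f|_{D^\tau}$ is injective on $\cH^2_\lambda(D)$. The substance of \cite{OlafssonOrsted} is that, after multiplication by a $\lambda$-dependent constant, this map extends to a unitary isomorphism onto $L^2(D^\tau, \widehat{\mu})$, where $\widehat{\mu}$ is the $H$-invariant Riemannian measure on the symmetric space $D^\tau = H/(H\cap K)$; concretely one checks that the restriction of the Bergman kernel $k_\lambda$ to $D^\tau \times D^\tau$ provides, up to a constant, the reproducing structure for this $L^2$-space. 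Since $\widehat{\mu}$ is the invariant measure on a fixed Riemannian symmetric space, it is manifestly $\widetilde{H}$-invariant and carries no dependence on $\lambda$, all the $\lambda$-dependence having been absorbed into the normalizing constant defining $U_\lambda$. This is precisely (1).

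Property (2) is the equivariance of this restriction map. I would verify that for $h \in \widetilde{H}$ and $x \in D^\tau$ the Jacobian factor $j(h^{-1},x)^{\frac{\lambda}{p}}$ appearing in $\pi_\lambda$ matches exactly the Radon--Nikodym factor of the $\widetilde{H}$-action on $(D^\tau, \widehat{\mu})$; here the anti-holomorphic condition $\tau(Z_0) = -Z_0$, equivalently the fact that $\eta$ fixes $\gp_+^\eta$, is what forces the complex Jacobian to restrict to the real volume distortion on the totally real slice. Granting this, the restriction map intertwines $\pi_\lambda|_{\widetilde{H}}$ with the quasi-regular representation $hf(x) = f(h^{-1}x)$, proving (2). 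Property (3) then follows formally: by Corollary~\ref{cor:intertwiningToeplitz} every $\varphi \in \cA^H$ gives $T^{(\lambda)}_\varphi \in \End_{\widetilde{H}}(\cH^2_\lambda(D))$, so by (2) the conjugate $U_\lambda \circ T^{(\lambda)}_\varphi \circ U_\lambda^{-1}$ commutes with the quasi-regular representation on $L^2(D^\tau, \widehat{\mu})$. Since the spherical Plancherel decomposition of $L^2(H/(H\cap K))$ is multiplicity-free, the commutant of the quasi-regular representation is diagonalized by the Helgason--Fourier transform, that is, it consists exactly of Fourier multipliers; hence $U_\lambda \circ T^{(\lambda)}_\varphi \circ U_\lambda^{-1}$ is a multiplier operator up to the Helgason--Fourier transform.

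The main obstacle is the construction of $U_\lambda$: proving that the normalized restriction map is unitary \emph{onto} all of $L^2(D^\tau, \widehat{\mu})$ with a $\lambda$-independent target measure, and pinning down the precise matching between $j(h^{-1},x)^{\frac{\lambda}{p}}$ and the invariant measure required for (2). These are exactly the delicate analytic points carried out in \cite{OlafssonOrsted}; once they are in hand, everything else reduces to formal manipulation of the intertwining property from Corollary~\ref{cor:intertwiningToeplitz} and the spherical Plancherel theorem for the Riemannian symmetric space $D^\tau$.
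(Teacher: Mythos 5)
Your overall architecture is the same as the paper's: commutativity is obtained from multiplicity-freeness (the paper simply quotes Theorem~\ref{thm:commToeplitz-symmPairs}, while you quote the multiplicity-free branching of \cite{OlafssonOrsted} together with Theorem~\ref{thm:commToeplitz-multiplicityFree} --- these routes are equivalent); properties (1) and (2) are delegated to the construction in \cite{OlafssonOrsted}; and property (3) is deduced, exactly as in the paper, from the inclusion $U_\lambda\, \End_{\widetilde{H}}(\cH^2_\lambda(D))\, U_\lambda^{-1} \subset \End_H(L^2(D^\tau,\widehat{\mu}))$ combined with the multiplicity-free spherical Plancherel decomposition of $L^2(D^\tau,\widehat{\mu})$ (Helgason, Harish-Chandra), under which $\widetilde{H}$-intertwining operators become Helgason--Fourier multipliers. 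However, your gloss on what \cite{OlafssonOrsted} actually provides contains a genuine error. The unitary $U_\lambda$ there is \emph{not} a $\lambda$-dependent constant times the restriction map $f \mapsto f|_{D^\tau}$. It is produced by the restriction principle: one takes a (possibly weighted) restriction operator $R$, which is closed, injective, and densely defined with dense range, and defines $U_\lambda$ as the unitary factor in the polar decomposition of $R^*$. The operator $RR^*$ is a positive $\widetilde{H}$-intertwining operator on $L^2(D^\tau,\widehat{\mu})$ which, precisely because the decomposition is multiplicity-free, acts under the Helgason--Fourier transform as multiplication by a nonconstant function; it is not a scalar, so no constant rescaling of restriction can be unitary, and $L^2(D^\tau,\widehat{\mu})$ is in any case not a reproducing kernel space, contrary to your remark about $k_\lambda|_{D^\tau\times D^\tau}$.

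The same problem undermines your proposed verification of (2). Since $\widehat{\mu}$ is $\widetilde{H}$-invariant, the Radon--Nikodym cocycle of the quasi-regular action $hf(x)=f(h^{-1}x)$ is identically $1$, whereas the automorphy factor $j(h^{-1},x)^{\lambda/p}$ appearing in $\pi_\lambda$ is not; so the ``matching'' you propose to check is false as stated, and the bare (or constant-rescaled) restriction map does not intertwine $\pi_\lambda|_{\widetilde{H}}$ with the quasi-regular representation. What saves the intertwining in \cite{OlafssonOrsted} is that $R$ itself is chosen equivariant (with a cocycle-correcting weight) and that $RR^*$ commutes with the $\widetilde{H}$-action, so the unitary part of the polar decomposition inherits the intertwining property. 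Since the paper, like you, black-boxes these analytic points to \cite{OlafssonOrsted}, your conclusion stands once the citation is invoked correctly; but the explicit mechanism you describe --- constant-normalized restriction plus a Jacobian/Radon--Nikodym comparison --- would fail if carried out literally, and should be replaced by the polar-decomposition construction.
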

\begin{proof}
    The commutativity of $\cT^{(\lambda)}(\cA^H)$ has already been established in Theorem~\ref{thm:commToeplitz-symmPairs}, so it remains to prove the last three claims. This will be based on the results from \cite{Ol2000, OlafssonOrsted}.

    In \cite{OlafssonOrsted} it is constructed an $\widetilde{H}$-invariant measure on $D^\tau$ and a unitary $\widetilde{H}$-intertwining isomorphism $U_\lambda : \cH^2_\lambda(D) \rightarrow L^2(D^\tau, \widehat{\mu})$ for which conditions (1) and (2) hold. On the other hand, $D^\tau$ is a reductive symmetric space so that the set of transformations given by $H$ contains the connected component of its isometry group. Also, $\widehat{\mu}$ is an $H$-invariant volume form on $D^\tau$. Hence, the decomposition of $L^2(D^\tau, \widehat{\mu})$ as a direct integral of equivalence classes of irreducible unitary representations of $\widetilde{H}$ is multiplicity-free. This follows from \cite[Eq.(21), p.118]{Helgason1970} and \cite{HarishChandraSpherical1,HarishChandraSpherical2}. Moreover, it is also known that for $T \in \End_H(L^2(D^\tau, \widehat{\mu}))$ the operator $\cF \circ T \circ \cF^{-1}$ is a multiplier operator, where $\cF$ is the Helgason-Fourier transform. Since we have
    \begin{align*}
        U_\lambda \End_{\widetilde{H}}(\cH^2_\lambda(D)) U_\lambda^{-1}
            &\subset \End_H(L^2(D^\tau, \widehat{\mu})), \\
        \cT^{(\lambda)}(\cA^H) &\subset \End_{\widetilde{H}}(\cH^2_\lambda(D)),
    \end{align*}
    and so claim (3) now follows.
\end{proof}

We now discuss some particular cases of Theorem~\ref{thm:commToeplitz-symmPairs} related to some of the already known commutative $C^*$-algebras generated by Toeplitz operators.

\begin{example}
    The unit ball in $\C^n$ can be realized as $\B^n = \SU(n,1)/\U(n)$. In particular, the pair $(\SU(n,1), \U(n))$ is symmetric with $\U(n)$ a maximal compact subgroup of $\SU(n,1)$. Hence, Theorem~\ref{thm:commToeplitz-maxCompact} implies that, for every $\lambda > n$, the $C^*$-algebra $\cT^{(\lambda)}(\cA^{\U(n)})$ is commutative on the weighted Bergman space $\cA^2_\lambda(\B^n)$.

    From the definitions it follows that $\cA^{\U(n)}$ consists of radial symbols, in other words those that are defined as the elements $a \in L^\infty(\B^n)$ that depend only on $|z|$ for $z \in \B^n$ or symbolically that satisfy $a(z) = a(|z|)$. This recovers one of the main results from \cite{GKVRadial}: the $C^*$-algebra generated by Toeplitz operators with radial symbols is commutative.
\end{example}

\begin{example}
    By Table~I, $(\SU(m,n), \SO_0(m,n))$ is a symmetric pair. Hence, Theorem~\ref{thm:commToeplitz-symmPairs} implies that $\cT^{(\lambda)}(\cA^{\SO_0(m,n)})$ is a commutative $C^*$-algebra on the weighted Bergman space $\cH^2_\lambda(D_{n,m}^I)$ for every $\lambda > m + n - 1$.

    As a particular case, by taking $m = 1$ we conclude that $\cT^{(\lambda)}(\cA^{\SO_0(n,1)})$ is a commutative $C^*$-algebra on the weighted Bergman space $\cA^2_\lambda(\B^n)$ for every $\lambda > n$. We note that the $\SO_0(n,1)$-orbit through the origin in $\B^n$ is precisely $M = \B^n \cap \R^n$, which is isometric to the real $n$-dimensional hyperbolic space with the induced metric from $\B^n$. Also, $M$ is a Lagrangian totally geodesic submanifold of $\B^n$.
\end{example}

As a consequence of the previous example we obtain the following result.

\begin{theorem}\label{thm:commToeplitzBn-nonflat}
    For $n \geq 2$, there exists a closed non-Abelian subgroup $H$ of $\SU(n,1)$ that satisfies the following properties.
    \begin{enumerate}
        \item For every $\lambda > n$, the $C^*$-algebra $\cT^{(\lambda)}(\cA^H)$ is commutative on the weighted Bergman space $\cA^2_\lambda(\B^n)$.
        \item There is an $H$-orbit $M$ which is $n$-dimensional, non-flat, Lagrangian and totally geodesic.
    \end{enumerate}
    In particular, the sets of symbols $\cA$ that define commutative $C^*$-algebras $\cT^{(\lambda)}(\cA)$, for every $\lambda > n$, are not exhausted by taking $\cA = \cA^H$ where $H$ is a connected maximal Abelian subgroup of $\SU(n,1)$, even if we require (2) to hold.
\end{theorem}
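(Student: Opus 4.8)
The plan is to take $H = \SO_0(n,1)$, realized inside $\SU(n,1)$ exactly as in the previous example, which treats the symmetric pair $(\SU(n,1),\SO_0(n,1))$. This $H$ is a closed subgroup and, for $n \geq 2$, it is non-Abelian (indeed simple; e.g.\ $\SO_0(2,1)$ is $3$-dimensional). Property~(1) is then immediate: by the previous example, which specializes Theorem~\ref{thm:commToeplitz-symmPairs} to $(\SU(n,1),\SO_0(n,1))$, the algebra $\cT^{(\lambda)}(\cA^{H})$ is commutative on $\cA^2_\lambda(\B^n)$ for every $\lambda > n$.

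For property~(2) I would use the orbit $M = \B^n \cap \R^n$ through the origin, already identified in the previous example as an $n$-dimensional, Lagrangian, totally geodesic submanifold isometric to the real hyperbolic $n$-space $\SO_0(n,1)/\SO(n)$. Since real hyperbolic space carries a metric of constant negative sectional curvature, it is non-flat as soon as $n \geq 2$; this is precisely where the hypothesis $n \geq 2$ enters. Thus $M$ realizes (2).

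The substance lies in the \emph{in particular} clause, which I would prove by contradiction. Suppose $\cA^{H} = \cA^{H'}$ for some connected maximal Abelian subgroup $H' \subset \SU(n,1)$, recalling that such subgroups are $n$-dimensional (cf.\ \cite{QVBall1,QVBall2}). The first ingredient is the general fact that every orbit of a connected Abelian group acting by isometries is flat: the orbit $H'\cdot p \cong H'/H'_p$ is itself an Abelian Lie group acting on itself simply transitively and preserving the induced metric, so that metric is translation-invariant, hence bi-invariant (the group being Abelian) and therefore flat. In particular every $H'$-orbit is flat and of dimension $\leq n$. The second ingredient is that every function in $\cA^{H'} = \cA^{H}$ is constant on the $H$-orbit $M$; since continuous bounded invariant functions separate orbit closures, all of $M$ lies in a single closure $\overline{H'\cdot p}$, of dimension $\leq n$. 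As $\dim M = n$ and $M$ is connected, $M$ must meet the open dense orbit $H'\cdot p \subset \overline{H'\cdot p}$ in a relatively open set, so $M$ contains a nonempty open piece of the flat submanifold $H'\cdot p$; by homogeneity of $M$ under $\SO_0(n,1)$ this forces $M$ to be flat, contradicting (2). Hence $\cA^{H}$ is not of the form $\cA^{H'}$, which is the assertion.

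The main obstacle is the separation step: converting the equality of the $L^\infty$ invariant algebras into coincidence of orbit closures. I would handle this by passing to continuous bounded invariant functions, which lie in both $\cA^H$ and $\cA^{H'}$, and invoking that for the explicitly describable maximal Abelian actions of \cite{QVBall1,QVBall2}, and for the proper $\SO_0(n,1)$-action along its orbit $M$, continuous invariants do separate orbit closures; the flatness computation and the dimension count are then routine.
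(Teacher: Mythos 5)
Your proposal is correct and takes essentially the same route as the paper: the paper states this theorem with no separate proof, presenting it as an immediate consequence of the preceding example, which exhibits $H = \SO_0(n,1)$ with commutativity of $\cT^{(\lambda)}(\cA^H)$ from the symmetric pair $(\SU(n,1),\SO_0(n,1))$ and the orbit $M = \B^n \cap \R^n$ isometric to real hyperbolic $n$-space (hence non-flat for $n \geq 2$), Lagrangian and totally geodesic. Your additional argument for the ``in particular'' clause --- that connected Abelian isometry groups have flat orbits, and that a separation-of-orbits argument (valid here since the actions are proper, so orbits are closed and continuous bounded invariant functions separate them) rules out $\cA^{\SO_0(n,1)} = \cA^{H'}$ for any maximal Abelian $H'$ --- is a sound and welcome filling-in of a step the paper leaves entirely implicit.
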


For our next result we assume that $D$ is a tube-type domain whose tube domain realization is given by $T(\Omega) = V + i\Omega$, where $\Omega \subset V$ is a symmetric cone and $V$ is the corresponding simple Euclidean Jordan algebra. Following \cite{ADO} we recall that for every $\lambda > p - 1$ there is a measure $\widehat{\mu}_\lambda$ in $\Omega$, a unitary representation of $\widetilde{G}$ on $L^2(\Omega, \widehat{\mu}_\lambda)$ and a unitary isomorphism
\[
    U_\lambda : \cH^2_\lambda(D) \rightarrow L^2(\Omega, \widehat{\mu}_\lambda)
\]
which is $\widetilde{G}$-intertwining. On the other hand, there is a biholomorphic and isometric action given by
\begin{align*}
    V \times T(\Omega) &\rightarrow T(\Omega), \\
        (v, x) &\mapsto v + x.
\end{align*}
Since the $\widetilde{G}$-action on $D$ realizes the holomorphic isometries of $D$, we obtain a subgroup $N^+$ of $\widetilde{G}$ isomorphic to the vector group $V$ such that the $N^+$-action on $\Omega$ realizes the previous $V$-action. Furthermore, \cite[Theorem 3.4]{ADO} implies that the $N^+$-action on $L^2(\Omega, \widehat{\mu})$ is given by characters of $N^+ \simeq V$ and so it is multiplicity-free.

Hence, Theorem~\ref{thm:commToeplitz-multiplicityFree} implies the following generalization of one of the main results from \cite{VasilevskiTube} where only the weightless case is considered.

\begin{theorem}
    \label{thm:commToeplitz-tube}
    The $C^*$-algebra $\cT^{(\lambda)}(\cA^{N^+})$ is commutative on the weighted Bergman space $\cH^2_\lambda(D)$ for every $\lambda > p - 1$. Furthermore, up to the unitary isomorphism $U_\lambda : \cH^2_\lambda(D) \rightarrow L^2(\Omega, \widehat{\mu}_\lambda)$ the elements of $\cT^{(\lambda)}(\cA^{N^+})$ are multiplier operators for every $\lambda > p - 1$.
\end{theorem}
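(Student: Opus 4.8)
The plan is to deduce commutativity directly from the abelian structure of $N^+$ and then to identify the full intertwining algebra explicitly using the character description of the $N^+$-action, from which the multiplier statement drops out.

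For the commutativity I would first observe that $N^+ \simeq V$ is abelian, hence of type I, and that the preceding discussion---in particular \cite[Theorem 3.4]{ADO} together with the $\widetilde{G}$-intertwining isomorphism $U_\lambda$---shows that $\pi_\lambda|_{N^+}$ is unitarily equivalent to the representation of $N^+$ on $L^2(\Omega, \widehat{\mu}_\lambda)$ acting by characters, which is multiplicity-free. By Proposition~\ref{prop:typeImultiplicityFree} the algebra $\End_{N^+}(\cH^2_\lambda(D))$ is therefore commutative, and then the argument of Theorem~\ref{thm:commToeplitz-multiplicityFree} (via Corollary~\ref{cor:intertwiningToeplitz}, which applies to any subgroup of $\widetilde{G}$) yields that $\cT^{(\lambda)}(\cA^{N^+})$ is a commutative $C^*$-algebra.

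For the second assertion I would transport everything through $U_\lambda$. By Corollary~\ref{cor:intertwiningToeplitz} every generator $T^{(\lambda)}_\varphi$ with $\varphi \in \cA^{N^+}$ lies in $\End_{N^+}(\cH^2_\lambda(D))$, hence so does every element of $\cT^{(\lambda)}(\cA^{N^+})$; consequently $U_\lambda \, \cT^{(\lambda)}(\cA^{N^+}) \, U_\lambda^{-1}$ is contained in $\End_{N^+}(L^2(\Omega, \widehat{\mu}_\lambda))$, where now $N^+$ acts by the characters. Writing $n_v$ for the element of $N^+$ corresponding to $v \in V$, the operator $U_\lambda \pi_\lambda(n_v) U_\lambda^{-1}$ is multiplication by a character of the form $\xi \mapsto e^{i\langle v, \xi\rangle}$ on $\Omega$. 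The crux is then to show that the commutant of the family $\{ M_{e^{i\langle v, \cdot\rangle}} : v \in V\}$ inside $\cL(L^2(\Omega, \widehat{\mu}_\lambda))$ consists exactly of the multiplication (multiplier) operators $M_\phi$ with $\phi \in L^\infty(\Omega, \widehat{\mu}_\lambda)$. Granting this, each element of $U_\lambda \, \cT^{(\lambda)}(\cA^{N^+}) \, U_\lambda^{-1}$ is a multiplier operator, which is the claim.

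The main obstacle is precisely this last identification of the commutant. The key point is that the characters $\xi \mapsto e^{i\langle v, \xi\rangle}$, $v \in V$, separate the points of $\Omega$ and are closed under products and complex conjugation (since $\overline{e^{i\langle v, \cdot\rangle}} = e^{i\langle -v, \cdot\rangle}$ and $e^{i\langle v,\cdot\rangle} e^{i\langle w,\cdot\rangle} = e^{i\langle v+w,\cdot\rangle}$), so the von Neumann algebra they generate by multiplication is the maximal abelian algebra $L^\infty(\Omega, \widehat{\mu}_\lambda)$ of all multiplication operators, whose commutant is itself. I would make this precise by checking that the weak-$*$ closed algebra generated by these characters is all of $L^\infty(\Omega, \widehat{\mu}_\lambda)$, which reduces to a standard Stone--Weierstrass and measure-theoretic argument on the cone $\Omega$ (using that $\widehat{\mu}_\lambda$ has full support), together with the classical fact that a maximal abelian multiplication algebra equals its own commutant.
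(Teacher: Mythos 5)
Your proposal is correct and takes essentially the same route as the paper: commutativity follows from the multiplicity-freeness of the character action of $N^+ \simeq V$ on $L^2(\Omega, \widehat{\mu}_\lambda)$ given by \cite[Theorem 3.4]{ADO} together with Theorem~\ref{thm:commToeplitz-multiplicityFree}, and the multiplier statement from conjugating $\End_{N^+}(\cH^2_\lambda(D))$ through $U_\lambda$. The only difference is that you spell out the commutant identification that the paper leaves implicit---namely that the characters $e^{i\langle v, \cdot\rangle}$, $v \in V$, generate the maximal abelian multiplication algebra $L^\infty(\Omega, \widehat{\mu}_\lambda)$, which equals its own commutant---and your argument for this step is sound, since $\widehat{\mu}_\lambda$ has full support on the open cone $\Omega$ and the characters separate its points.
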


\begin{example}
    As a particular case of the unit ball example, the unit disk $\D \subset \C$ is given by the Riemannian symmetric pair $(SU(1,1), \U(1))$. It is well known that every connected $1$-dimensional subgroup of $\SU(1,1)$ is conjugate to one of the following subgroups
    \begin{align*}
        \U(1) &= \left\{
            \begin{pmatrix}
                e^{it} & 0 \\
                0 & e^{-it}
            \end{pmatrix}
            : t \in \R \right\}, \\
        \SO_0(1,1) &= \left\{
            \begin{pmatrix}
                \cosh(t) & \sinh(t) \\
                \sinh(t) & \cosh(t)
            \end{pmatrix} : t \in \R \right\}, \\
        N &= \left\{
            \begin{pmatrix}
                1 + it & -it \\
                it & 1 - it
            \end{pmatrix} : t \in \R \right \}.
    \end{align*}
    The orbits of these groups are all given by circle arcs, and considering the description of such arcs for each case we obtain the following description of the corresponding invariant symbols.
    \begin{description}
        \item[Elliptic] The elements of $\cA^{\U(1)}$ are the radial bounded symbols as considered in the more general case of the unit ball. These symbols are also called
            elliptic in \cite{GQV}.
        \item[Hyperbolic] The elements of $\cA^{\SO_0(1,1)}$ are the bounded symbols that are constant on the circle arcs that connect $1$ and $-1$. These are precisely the hyperbolic bounded symbols in the notation of \cite{GQV}.
        \item[Parabolic] The elements of $\cA^N$ are the bounded symbols that are constant on the horocycles centered at $1$. These are precisely the parabolic bounded symbols in the notation of \cite{GQV}
    \end{description}

    Since the pairs $(\SU(1,1), \U(1))$ and $(\SU(1,1), \SO_0(1,1))$ are both symmetric, it follows from Theorem~\ref{thm:commToeplitz-symmPairs} that, for every $\lambda > 1$, the $C^*$-algebras $\cT^{(\lambda)}(\cA^{\U(1)})$ and $\cT^{(\lambda)}(\cA^{\SO_0(1,1)})$ are commutative on the weighted Bergman space $\cA^2_\lambda(\D)$. This recovers Theorem~2.1 from \cite{GQV} for elliptic and hyperbolic bounded symbols (see also \cite{GKVHyperbolic}).

    On the other hand, we note that $(\SU(1,1), N)$ is not a symmetric pair, and Theorem~\ref{thm:commToeplitz-symmPairs} cannot be applied in this case. In the half-plane realization of $\D$, the $N$-action is precisely the one considered by Theorem~\ref{thm:commToeplitz-tube}. Hence, the $C^*$-algebra $\cT^{(\lambda)}(\cA^N)$ is commutative on the weighted Bergman space $\cA^2_\lambda(\D)$ for every $\lambda > 1$. Thus, we fully recover all cases considered in Theorem~2.1 from \cite{GQV} (see also \cite{GKVParabolic}).
\end{example}

\section{Commuting Toeplitz operators: compact group case}
\label{sec:commToeplitz-compact}
In this section $H$ will denote a compact subgroup of $G$ and it will be endowed with a Haar measure whose total volume is $1$. Then, for every symbol $\varphi \in L^\infty(D)$ we define
\[
    \widehat{\varphi}(z) = \int_H \varphi_h(z) dh = \int_H \varphi(h^{-1}z) dh
\]
where $z \in D$. Hence, $\widehat{\varphi}$ is both $H$-invariant and $\widetilde{H}$-invariant. A particular case is given by $H = K$, the maximal compact subgroup of $G$ introduced in Section~\ref{sec:BergmanSpaces}.

Similarly, for every $T \in \cL(\cH^2_\lambda(D))$ we define a bounded linear operator obtained by averaging over $H$. First, we note that the relevant expression to consider is
\[
    \pi_\lambda(h) \circ T \circ \pi_\lambda(h)^{-1}
\]
for $h \in \widetilde{H}$, where now $\widetilde{H}$ is not necessarily a compact group. However, $Z(\widetilde{H}) = \widetilde{H} \cap Z(\widetilde{G}) \subset Z(\widetilde{G})$ acts through $\pi_\lambda$ as multiplication by elements of $\T$. In particular, for every $h \in \widetilde{H}$ and $h_0 \in Z(\widetilde{H})$ we have
\[
    \pi_\lambda(h) \circ T \circ \pi_\lambda(h)^{-1}
    = \pi_\lambda(hh_0) \circ T \circ \pi_\lambda(hh_0)^{-1}.
\]

On the other hand, the restriction of the universal covering map $\widetilde{G} \rightarrow G$ yields a map $\widetilde{H} \rightarrow H$ which is a local isomorphism and so a covering map. It follows that there is a discrete subgroup $Z \subset Z(\widetilde{H})$ so that $H = \widetilde{H}/Z$. Hence, the above remarks show that the assignment
\[
    h \mapsto \pi_\lambda(h) \circ T \circ \pi_\lambda(h)^{-1}
\]
defines a function that descends to $H$. Without loss of generality, we will denote the latter by the same expression. As a consequence, for every operator $T \in \cL(\cH^2_\lambda(D))$ there is a unique operator $\widehat{T} \in \cL(\cH^2_\lambda(D))$ defined by requiring the identity
\[
    \langle \widehat{T} f, g \rangle = \int_H \langle\pi_\lambda(h)\circ T\circ \pi_\lambda(h)^{-1} f, g \rangle dh
\]
to hold for every $f,g \in \cH^2_\lambda(D)$. We note that $\widehat{T}$ belongs to $\End_{\widetilde{H}}(\cH^2_\lambda(D))$.

\begin{lemma}
\label{lem:averagingToeplitz}
If $H \subset G$ is a compact subgroup, then we have
\[
    \widehat{T_\varphi^{(\lambda)}} = T^{(\lambda)}_{\widehat{\varphi}}
\]
for every $\varphi \in L^\infty(D)$ and every $\lambda > p - 1$.
\end{lemma}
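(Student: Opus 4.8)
The plan is to reduce the averaged operator $\widehat{T^{(\lambda)}_\varphi}$ to an honest Toeplitz operator by first evaluating the conjugated operators appearing in its definition, and then interchanging the order of integration over the group $H$ and over the domain $D$. First I would apply Lemma~\ref{lem:intertwiningToeplitz}. Written in the form $\pi_\lambda(h) \circ T^{(\lambda)}_\varphi \circ \pi_\lambda(h)^{-1} = T^{(\lambda)}_{\varphi_h}$ for every $h \in \widetilde{H}$, it shows that the integrand in the defining identity for $\widehat{T^{(\lambda)}_\varphi}$ is exactly $\langle T^{(\lambda)}_{\varphi_h} f, g \rangle$. Hence it suffices to prove
\[
    \int_H \langle T^{(\lambda)}_{\varphi_h} f, g \rangle\, dh = \langle T^{(\lambda)}_{\widehat{\varphi}} f, g \rangle
\]
for all $f, g \in \cH^2_\lambda(D)$, since the nondegeneracy of the inner product then yields the claimed operator identity.

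Next I would rewrite both sides as integrals over $D$. Using that $B_\lambda$ is the self-adjoint orthogonal projection onto $\cH^2_\lambda(D)$ and that $g \in \cH^2_\lambda(D)$, for any symbol $\psi \in L^\infty(D)$ one has $\langle T^{(\lambda)}_\psi f, g \rangle = \langle B_\lambda M_\psi f, g \rangle = \langle M_\psi f, g \rangle = \int_D \psi(w) f(w) \overline{g(w)}\, d\mu_\lambda(w)$. Applying this with $\psi = \varphi_h$ on the left and $\psi = \widehat{\varphi}$ on the right, the desired identity becomes
\[
    \int_H \int_D \varphi(h^{-1}w) f(w) \overline{g(w)}\, d\mu_\lambda(w)\, dh
        = \int_D \widehat{\varphi}(w) f(w) \overline{g(w)}\, d\mu_\lambda(w),
\]
which is precisely Fubini's theorem combined with the definition $\widehat{\varphi}(w) = \int_H \varphi(h^{-1}w)\, dh$.

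The one point that genuinely requires care is the verification of the hypotheses of Fubini's theorem. Since $\varphi \in L^\infty(D)$ we have the domination $|\varphi(h^{-1}w) f(w) \overline{g(w)}| \leq \|\varphi\|_\infty |f(w)|\,|g(w)|$, and the right-hand side is integrable on $H \times D$ because $H$ carries a probability measure and $|f|\,|g| \leq \tfrac12(|f|^2 + |g|^2) \in L^1(D, \mu_\lambda)$. The hard part will be the joint measurability of $(h,w) \mapsto \varphi(h^{-1}w)$ on $H \times D$, which I would deduce from the continuity of the action $\widetilde{G} \times D \to D$ together with the measurability of $\varphi$; this is where I expect the only real technical attention to be needed. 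Granting this, Fubini applies and the two displays above coincide. Finally I would note that $\widehat{\varphi} \in L^\infty(D)$, with $|\widehat{\varphi}| \leq \|\varphi\|_\infty$, so that $T^{(\lambda)}_{\widehat{\varphi}}$ is indeed defined, completing the argument.
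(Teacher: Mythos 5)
Your proposal is correct and follows essentially the same route as the paper's proof: apply Lemma~\ref{lem:intertwiningToeplitz} to identify the integrand as $\langle T^{(\lambda)}_{\varphi_h} f, g\rangle$, rewrite everything as a double integral over $H \times D$ via the self-adjointness of $B_\lambda$, and interchange the order of integration. The only difference is that you spell out the Fubini hypotheses (domination by $\|\varphi\|_\infty\,|f|\,|g|$ and joint measurability of $(h,w) \mapsto \varphi(h^{-1}w)$), which the paper passes over in silence; this is a welcome tightening rather than a divergence.
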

\begin{proof}
For each $f,g\in \cH^2_\lambda(D)$, we have that
\begin{align*}
  \langle \widehat{T^{(\lambda)}_\varphi} f, g \rangle
    & = \int_H \langle \pi_\lambda(h)\circ T_\varphi^{(\lambda)} \circ \pi_\lambda(h)^{-1} f, g \rangle dh \\
    & = \int_H \langle T_{\varphi_h}^{(\lambda)} f,g \rangle dh \\
    & = \int_H \int_D \varphi_h(z) f(z)\overline{g(z)} d\mu_\lambda(z) dh \\
    & = \int_D \left( \int_H \varphi_h(z)dh \right) f(z)\overline{g(z)} d\mu_\lambda(z) \\
    & = \langle T^{(\lambda)}_{\widehat{\varphi}} f,g\rangle.
\end{align*}
\end{proof}

The following result is a generalization of Theorem~2 from \cite{Englis} to intertwining operators, and so it is interesting by its own right.

\begin{proposition}
\label{prop:DenseInvariantToeplitz}
Suppose that $H$ is a compact subgroup of $G$.  Then the space $\cT^{(\lambda)}(\cA^H)$ of Toeplitz operators on $\cH^2_\lambda(D)$ with $H$-invariant-symbols in $L^\infty(D)$ is dense in the space $\End_{\widetilde{H}}(\cH^2_\lambda(D))$ of $\widetilde{H}$-intertwining operators on $\cH^2_\lambda(D)$ under the strong operator topology.
\end{proposition}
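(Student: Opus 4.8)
The plan is to deduce the statement from Engli\v{s}'s strong density theorem by averaging over the compact group $H$. Write $\Phi(S) = \widehat{S}$ for the averaging operator introduced above, so that $\langle \Phi(S)f, g\rangle = \int_H \langle \pi_\lambda(h)\circ S \circ \pi_\lambda(h)^{-1} f, g\rangle\, dh$. The map $\Phi$ is linear and contractive, it always takes values in $\End_{\widetilde{H}}(\cH^2_\lambda(D))$, and because $\pi_\lambda(h)\circ T\circ \pi_\lambda(h)^{-1} = T$ for $T \in \End_{\widetilde{H}}(\cH^2_\lambda(D))$ while $H$ has total mass $1$, it restricts to the identity on $\End_{\widetilde{H}}(\cH^2_\lambda(D))$; thus $\Phi$ is a contractive projection of $\cL(\cH^2_\lambda(D))$ onto the intertwiner algebra. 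By Lemma~\ref{lem:averagingToeplitz} it sends each Toeplitz operator $T^{(\lambda)}_\varphi$ to the $H$-invariant-symbol Toeplitz operator $T^{(\lambda)}_{\widehat{\varphi}}$. Hence the scheme is: start from a family of ordinary Toeplitz operators approximating a given $T \in \End_{\widetilde{H}}$ strongly (which exists by Theorem~2 of \cite{Englis}), apply $\Phi$, and check that the images still converge to $\Phi(T) = T$ while now lying in $\cT^{(\lambda)}(\cA^H)$.

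The one point that makes this work is that $\Phi$ is continuous for the strong operator topology on norm-bounded subsets of $\cL(\cH^2_\lambda(D))$. I would prove this using that on a ball $\{\,\|S\| \le M\,\}$ the strong topology coincides with uniform convergence on norm-compact subsets of $\cH^2_\lambda(D)$: given a compact set and $\epsilon$, cover it by finitely many balls of radius $\epsilon/(3M)$ and combine the finitely many strong-convergence estimates at the centers over a common tail of the net. Applying this to the orbit $C_f = \{\pi_\lambda(h)^{-1} f : h \in H\}$, which is compact since $H$ is compact and $h \mapsto \pi_\lambda(h)^{-1} f$ is continuous, and using unitarity of $\pi_\lambda(h)$, one gets for a bounded net $S_\alpha \to S$ strongly
\[
    \|\Phi(S_\alpha) f - \Phi(S) f\|
        \le \int_H \|(S_\alpha - S)\pi_\lambda(h)^{-1} f\|\, dh
        \le \sup_{v \in C_f}\|(S_\alpha - S) v\| \longrightarrow 0,
\]
since $H$ has mass $1$. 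This is exactly the mechanism by which averaging commutes with the strong limit; note that the boundedness hypothesis cannot be dropped, since a pointwise-bounded-only net need not integrate to the limit over $H$.

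With this in hand the proof assembles quickly. Fix $T \in \End_{\widetilde{H}}(\cH^2_\lambda(D))$. By Theorem~2 of \cite{Englis} there is a family of Toeplitz operators converging to $T$ in the strong operator topology, and --- this is the step I expect to be the main obstacle --- I need this family to be \emph{uniformly} norm-bounded. This can be arranged from Engli\v{s}'s construction, in which the approximating symbols may be taken bounded by $\|T\|$ (so that the operator norms are at most $\|T\|$, as in the Berezin-transform approximation underlying \cite{Englis}); without such a bound the continuity of $\Phi$ established above does not apply and the argument breaks. Granting a uniformly bounded approximating net $T^{(\lambda)}_{\psi_\alpha} \to T$ strongly, the previous paragraph gives $T^{(\lambda)}_{\widehat{\psi_\alpha}} = \Phi(T^{(\lambda)}_{\psi_\alpha}) \to \Phi(T) = T$ strongly. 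Since each $\widehat{\psi_\alpha}$ is $H$-invariant, every $T^{(\lambda)}_{\widehat{\psi_\alpha}}$ belongs to $\{T^{(\lambda)}_\varphi : \varphi \in \cA^H\} \subseteq \cT^{(\lambda)}(\cA^H)$, which is therefore strongly dense in $\End_{\widetilde{H}}(\cH^2_\lambda(D))$. An alternative route that sidesteps the boundedness issue is to apply the von Neumann bicommutant and Kaplansky density theorems to the $*$-algebra $\cT^{(\lambda)}(\cA^H) \subseteq \End_{\widetilde{H}}$, reducing the claim to the commutant identity $\cT^{(\lambda)}(\cA^H)' = \pi_\lambda(\widetilde{H})''$; there the nontrivial inclusion is to show that any operator commuting with all $H$-invariant Toeplitz operators already lies in the von Neumann algebra generated by $\pi_\lambda(\widetilde{H})$, which is where Engli\v{s}'s triviality of the full Toeplitz commutant would have to be combined with the $H$-averaging.
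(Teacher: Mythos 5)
Your overall scheme---average Engli\v{s} approximants over $H$ via the projection $\Phi(S)=\widehat{S}$---is the right spirit, and your auxiliary claims about $\Phi$ (contractive projection onto $\End_{\widetilde{H}}(\cH^2_\lambda(D))$, SOT-continuous on norm-bounded sets, compatible with Lemma~\ref{lem:averagingToeplitz}) are all correct. But the step you yourself flag as the main obstacle is a genuine gap, and your proposed fix does not hold up. Theorem~2 of \cite{Englis} does not produce approximating Toeplitz operators with symbols bounded by $\|T\|$ (nor with any uniform norm bound): what it provides is a Toeplitz operator with smooth compactly supported symbol whose matrix coefficients $\langle T_\varphi f_i, f_j\rangle$ match those of $T$ \emph{exactly} on a prescribed finite set of vectors, obtained by solving an interpolation problem; the sup-norms of these symbols are uncontrolled, and no bounded-symbol SOT-density theorem is available in the cited literature. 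Nor can you extract boundedness abstractly: SOT-convergent \emph{nets} need not be norm-bounded, and the Banach--Steinhaus escape route fails because one cannot extract an SOT-convergent sequence from an unbounded approximating net (pointwise convergence on a countable dense set of vectors does not imply boundedness). So the continuity of $\Phi$ you proved never gets to act on the Engli\v{s} approximants, and the main line of your argument breaks. The alternative route you sketch via the bicommutant and Kaplansky density is not a repair either: reducing the claim to the commutant identity $\cT^{(\lambda)}(\cA^H)' = \pi_\lambda(\widetilde{H})''$ is essentially a restatement of the proposition (by the double commutant theorem, since $\cT^{(\lambda)}(\cA^H)$ is a unital $*$-algebra), and you leave the nontrivial inclusion unproven.

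The paper's proof avoids the boundedness issue entirely, and the mechanism is worth internalizing: because $H$ is compact and $Z(\widetilde{H})$ acts through $\pi_\lambda$ by scalars, $\cH^2_\lambda(D)$ decomposes into an orthogonal sum of \emph{finite-dimensional $\widetilde{H}$-invariant} subspaces $W$. Applying Engli\v{s}'s exact matrix-coefficient matching on a basis of such a $W$, and using that $W$ is $\widetilde{H}$-invariant while $T$ intertwines, one gets $\langle T^{(\lambda)}_{\varphi_h}f,g\rangle = \langle Tf,g\rangle$ \emph{exactly} for every single $h\in\widetilde{H}$ and $f,g\in W$ (via Lemma~\ref{lem:intertwiningToeplitz}); integrating over $H$ and invoking Lemma~\ref{lem:averagingToeplitz} then replaces $\varphi$ by the $H$-invariant symbol $\widehat{\varphi}$ with $\langle Tf,g\rangle = \langle T^{(\lambda)}_{\widehat{\varphi}}f,g\rangle$ on $W$, with no limit interchange and hence no need for norm control. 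Running over all finite-dimensional invariant $W$ gives density in the weak operator topology, which upgrades to the strong operator topology because $\cT^{(\lambda)}(\cA^H)$ is convex. In short: your averaging idea matches the paper's, but the exactness of the approximation on invariant finite-dimensional subspaces---not a bounded global SOT limit---is the ingredient that makes the averaging legitimate, and it is precisely what your proposal is missing.
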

\begin{proof}
Let $T \in \End_{\widetilde{H}}(\cH^2_\lambda(D))$ be given.

Because $H$ is a compact group, we see that $\cH^2_\lambda(D)$ decomposes into an orthogonal direct sum of finite-dimensional $\widetilde{H}$-invariant subspaces. This follows from the fact that we have $\widetilde{H} = H' Z(\widetilde{H})$ where $H'$ is a compact subgroup and $Z(\widetilde{H})$ acts by multiplication by elements in $\T$.

Let $W$ be a finite-dimensional $\widetilde{H}$-invariant subspace of $\cH^2_\lambda(D)$, and choose a basis $\{f_1,\ldots f_k\}$ for $W$.  By Theorem 2 in \cite{Englis}, it follows that there is a Toeplitz operator $T_{\varphi}$ with smooth, compactly-supported symbol $\varphi$ such that
\[
    \langle Tf_i, f_j \rangle = \langle T_\varphi f_i, f_j \rangle
\]
for $1\leq i,j \leq k$.  In particular,
\[
    \langle Tf, g \rangle = \langle T_\varphi f, g\rangle
\]
for all $f,g$ in $W$.

For any $h\in \widetilde{H}$ we have that
\begin{align*}
   \langle T^{(\lambda)}_{\varphi_h} \pi_\lambda(h)f, g\rangle
                 & = \langle \pi_\lambda(h) T^{(\lambda)}_\varphi f,g\rangle \\
                 & = \langle T^{(\lambda)}_\varphi f, \pi_\lambda(h^{-1})g \rangle \\
                 & = \langle T f, \pi_\lambda(h^{-1})g \rangle \quad\text{(since }W\text{ is }\widetilde{H}\text{-invariant)}\\
                 & = \langle \pi_\lambda(h)Tf, g\rangle \\
                 & = \langle T \pi_\lambda(h) f, g \rangle,
\end{align*}
Since $\pi_\lambda(h)$ is unitary and $f, g\in W$ were arbitrary, we see that
\[
   \langle T f, g \rangle = \langle T^{(\lambda)}_{\varphi_h} f, g\rangle
\]
for all $h \in \widetilde{H}$ and $f, g \in W$. By Lemma~\ref{lem:intertwiningToeplitz} we obtain for every $f,g \in \cH^2_\lambda(D)$
\[
   \langle T f, g \rangle
   = \langle \pi_\lambda(h) \circ T^{(\lambda)}_\varphi \circ \pi_\lambda(h)^{-1} f, g\rangle
\]
which by the remarks in this section can be considered as an identity that holds for every $h \in H$. Hence, we can integrate over $H$ and use Lemma~\ref{lem:averagingToeplitz} to conclude that
\[
   \langle T f, g \rangle = \langle T^{(\lambda)}_{\widehat{\varphi}} f, g\rangle
\]
for all $f, g\in W$. Note that $\widehat{\varphi}$ is an $H$-invariant symbol.

Because $W$ is an arbitrary finite-dimensional $\widetilde{H}$-invariant subspace and every cyclic representation of $\widetilde{H}$ is finite-dimensional, it follows that the space $\cT^{(\lambda)}(\cA^H)$ of Toeplitz operators with $H$-invariant symbols is dense in $\End_{\widetilde{H}}(\cH^2_\lambda(D))$ in the weak operator topology. Density in the strong operator topology follows because $\cT^{(\lambda)}(\cA^H)$ is a linear subspace and in particular convex.
\end{proof}

We now consider the following elementary result that we include for the sake of completeness.

\begin{lemma}
\label{lem:CommutingStrongClosure}
Suppose that $V$ is a linear subspace of the space $\cL(\cH)$ of bounded linear operators on a Hilbert space $\cH$.  If $V$ consists of operators which commute, then the closure $W$ of $V$ in the strong operator topology also consists of operators which commute.
\end{lemma}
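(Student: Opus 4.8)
The plan is to avoid any attempt to pass to a limit in both factors of a product simultaneously, since multiplication is not jointly continuous for the strong operator topology. Instead I would reduce the problem to the statement that the commutant $\{A\}' = \{X \in \cL(\cH) : AX = XA\}$ of a single fixed operator $A$ is closed in the strong operator topology, and then bootstrap from $V$ to $W$ in two stages.

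First I would record the key continuity fact: for a fixed operator $A \in \cL(\cH)$, both the left multiplication map $X \mapsto AX$ and the right multiplication map $X \mapsto XA$ are continuous for the strong operator topology. Indeed, if a net $(X_\alpha)$ converges strongly to $X$, then for every $x \in \cH$ we have $AX_\alpha x \to AXx$ (because $A$ is norm-continuous and $X_\alpha x \to Xx$) and $X_\alpha A x \to XAx$ (by applying strong convergence to the fixed vector $Ax$). Consequently, if every $X_\alpha$ lies in $\{A\}'$, then the equality $AX_\alpha x = X_\alpha A x$ passes to the limit to give $AXx = XAx$ for all $x \in \cH$; hence $\{A\}'$ is strongly closed.

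With this in hand the conclusion follows in two steps. Fix $A \in V$. Since $V$ is a commuting family, $V \subseteq \{A\}'$, and because $\{A\}'$ is strongly closed it contains the strong closure $W$; thus every $T \in W$ commutes with $A$. As $A \in V$ was arbitrary, this shows $V \subseteq \{T\}'$ for each $T \in W$. Applying the same closedness argument to $\{T\}'$ now gives $W \subseteq \{T\}'$, that is, every $S \in W$ commutes with $T$. Since $T \in W$ was arbitrary, any two elements of $W$ commute, as desired.

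The only point requiring care --- and the reason the naive approach fails --- is that one cannot approximate $S$ and $T$ simultaneously by nets in $V$ and then take limits in the products $ST$ and $TS$, since the product is not jointly strongly continuous. The two-stage argument circumvents this precisely by ever letting only one factor vary while the other is held fixed, which is exactly the regime in which strong continuity of multiplication does hold.
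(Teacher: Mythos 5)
Your proof is correct and follows essentially the same route as the paper: both arguments proceed in the same two stages (first commuting elements of $W$ past a fixed element of $V$, then past a fixed element of $W$), exploiting the separate strong continuity of multiplication in each step. The only difference is packaging --- the paper carries out the $\epsilon$-estimates by hand, while you encapsulate them in the cleaner observation that the commutant $\{A\}'$ of a fixed operator is closed in the strong operator topology.
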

\begin{proof}
First we show that $S \widetilde{T} = \widetilde{T} S$ for any $S\in W$ and $\widetilde{T} \in V$.

Fix $v\in \cH$; we will show that $S\widetilde{T}v = \widetilde{T}Sv$.
Because $V$ is dense in $W$ in the strong operator topology, there exists for any $\epsilon >0$ an operator $\widetilde{S}$ in $V$ such that
\[
   ||(\widetilde{S}-S)v|| < \epsilon \text{ and }
                ||(\widetilde{S}-S)(\widetilde{T}v)||<\epsilon.
\]
It then follows that
\begin{align*}
||(S\widetilde{T} - \widetilde{T}S)v|| &
     \leq ||(S\widetilde{T} - \widetilde{S}\widetilde{T})v|| +
          ||(\widetilde{S}\widetilde{T} - \widetilde{T}S)v||  \\
   & = ||(S - \widetilde{S})\widetilde{T}v|| +
          ||(\widetilde{T}(\widetilde{S} - S)v||  \\
   & \leq \epsilon + ||\widetilde{T}||\epsilon
\end{align*}
where we use the fact that operators in $V$ commute.  Because $\epsilon >0$ was arbitrary, it follows that $S\widetilde{T}v = \widetilde{T}Sv$.

Now let $S,T\in W$.  Fix $v\in V$.  Because $V$ is strongly dense in $W$, there is for each $\epsilon>0$ an operator $\widetilde{T}\in V$ such that
\[
   ||(\widetilde{T}-T)v|| < \epsilon \text{ and }
                ||(\widetilde{T}-T)(Sv)||<\epsilon.
\]
Then we have
\begin{align*}
||(ST-TS)v|| &
     \leq ||(ST - S\widetilde{T})v|| +
          ||(S\widetilde{T} - TS)v||  \\
   & = ||S(T - \widetilde{T})v|| +
          ||(\widetilde{T}-T)Sv||  \\
   & \leq ||S||\epsilon + \epsilon,
\end{align*}
where we use the fact that $S\widetilde{T} = \widetilde{T}S$.
Because $\epsilon>0$ was arbitrary, we see that $STv  = TS v$.
\end{proof}

The following is a characterization of the compact subgroups of $G$ for which the corresponding invariant symbols yield commutative $C^*$-algebras of Toeplitz operators.

\begin{theorem}
    \label{thm:commToeplitz-compact}
    Suppose that $H$ is a compact subgroup of $G$. Then, the following conditions are equivalent for every $\lambda > p - 1$.
    \begin{enumerate}
        \item The $C^*$-algebra $\cT^{(\lambda)}(\cA^H)$ is commutative.
        \item The restriction $\pi_\lambda|_{\widetilde{H}}$ is multiplicity-free.
    \end{enumerate}
\end{theorem}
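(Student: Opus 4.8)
The plan is to prove the equivalence of the two conditions by combining the density result of Proposition~\ref{prop:DenseInvariantToeplitz} with the von Neumann-algebraic characterization of multiplicity-freeness from Proposition~\ref{prop:typeImultiplicityFree}. Since $H$ is compact it is automatically of type I, so Proposition~\ref{prop:typeImultiplicityFree} applies to the restriction $\pi_\lambda|_{\widetilde{H}}$ and tells us that $\pi_\lambda|_{\widetilde{H}}$ is multiplicity-free if and only if the commutant $\End_{\widetilde{H}}(\cH^2_\lambda(D))$ is a commutative algebra. Thus it suffices to show that the $C^*$-algebra $\cT^{(\lambda)}(\cA^H)$ is commutative if and only if $\End_{\widetilde{H}}(\cH^2_\lambda(D))$ is commutative.

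For the direction $(2) \Rightarrow (1)$, I would note that this is essentially already contained in Theorem~\ref{thm:commToeplitz-multiplicityFree}: if $\pi_\lambda|_{\widetilde{H}}$ is multiplicity-free, then $\End_{\widetilde{H}}(\cH^2_\lambda(D))$ is commutative, and since by Corollary~\ref{cor:intertwiningToeplitz} every Toeplitz operator with $H$-invariant symbol lies in $\End_{\widetilde{H}}(\cH^2_\lambda(D))$, the generators of $\cT^{(\lambda)}(\cA^H)$ pairwise commute, whence the whole $C^*$-algebra is commutative. (One should note $\cA^H = \cA^{\widetilde{H}}$ so that invariance under $H$ and under $\widetilde{H}$ coincide.)

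The substantive direction is $(1) \Rightarrow (2)$, and here the key is the density statement. Suppose $\cT^{(\lambda)}(\cA^H)$ is commutative. By Proposition~\ref{prop:DenseInvariantToeplitz}, $\cT^{(\lambda)}(\cA^H)$ is dense in $\End_{\widetilde{H}}(\cH^2_\lambda(D))$ in the strong operator topology. Now apply Lemma~\ref{lem:CommutingStrongClosure} with $V = \cT^{(\lambda)}(\cA^H)$ and $\cH = \cH^2_\lambda(D)$: since $V$ is a linear subspace consisting of pairwise-commuting operators, its strong closure $W$ also consists of pairwise-commuting operators. But this strong closure is exactly $\End_{\widetilde{H}}(\cH^2_\lambda(D))$ (using that this commutant is strongly closed, being a von Neumann algebra, so it equals the closure $W$). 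Hence $\End_{\widetilde{H}}(\cH^2_\lambda(D))$ is commutative, and Proposition~\ref{prop:typeImultiplicityFree} then yields that $\pi_\lambda|_{\widetilde{H}}$ is multiplicity-free.

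The main obstacle, or rather the point that must be handled carefully, is verifying that $V$ being a commuting \emph{linear} subspace is exactly the hypothesis of Lemma~\ref{lem:CommutingStrongClosure}: one must confirm that the generating Toeplitz operators $\{T^{(\lambda)}_\varphi : \varphi \in \cA^H\}$, which are pairwise commuting under hypothesis~(1), indeed span a commuting linear subspace, and that the full algebra they generate remains commutative so that Lemma~\ref{lem:CommutingStrongClosure} can be invoked on the appropriate space. Everything else is a direct citation of the earlier results, so the proof is short once the pieces are assembled in the right order.
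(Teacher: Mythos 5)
Your proof is correct and follows essentially the same route as the paper's: $(2)\Rightarrow(1)$ via Theorem~\ref{thm:commToeplitz-multiplicityFree}, and $(1)\Rightarrow(2)$ by combining Proposition~\ref{prop:DenseInvariantToeplitz}, Lemma~\ref{lem:CommutingStrongClosure}, and Proposition~\ref{prop:typeImultiplicityFree}. The caveat you raise at the end is vacuous: hypothesis~(1) states that the full $C^*$-algebra $\cT^{(\lambda)}(\cA^H)$ is commutative, so it is itself the commuting linear subspace to which Lemma~\ref{lem:CommutingStrongClosure} applies directly.
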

\begin{proof}
    By Theorem~\ref{thm:commToeplitz-multiplicityFree} condition (2) implies condition (1).

    If (1) holds, then the space of Toeplitz operators with $H$-invariant symbols is commutative. By Proposition~\ref{prop:DenseInvariantToeplitz} and Lemma~\ref{lem:CommutingStrongClosure} it follows that the space $\End_{\widetilde{H}}(\cH^2_\lambda(D))$ is commutative as well. By Proposition~\ref{prop:typeImultiplicityFree} it follows that $\pi_\lambda|_{\widetilde{H}}$ is multiplicity-free.
\end{proof}

\begin{example}
\label{ex:torusSU(n,m)}
We now follow the notation from Example~\ref{ex:SU(n,m)}. Recall that we can take as a Cartan subalgebra the subalgebra of diagonal matrices in $\su(n,m)$
\[
    \gt =\left\{ d(\bs,\bt)
    \,\left|\, \bs \in \R^n,\; \bt \in \R^m,\; \sum_{j=1}^n s_j = -\sum_{k=1}^m t_k
    \right.\right\},
\]
where we denote
\[
    d(\bs,\bt)=i \sum_{j=1}^n s_jE_{jj} + i \sum_{j=1}^m t_jE_{n+j,n+j}
\]
and $E_{jk}$ denotes the matrix in $M_{(n+m)\times(n+m)}(\C)$ which is everywhere $0$ except at the position $(j,k)$ where it is $1$.

For $H$ the maximal torus with Lie algebra $\gt$, we therefore get the action
\[
    e^{-d(\bs,\bt)}Z = (e^{i(t_k -s_j)}z_{jk})_{jk}
\]
for every $Z = (z_{jk})_{jk} \in \gp_+ \simeq M_{n\times m}(\C)$. Hence, for every $\alpha \in \N^{n\times m}$ the action on the monomial function $Z^\alpha$ is given by
\[
    \pi_\lambda(e^{d(\bs,\bt)}) Z^\alpha = \exp\left(i\sum_{j=1}^n\sum_{k=1}^m(t_k - s_j)\alpha_{jk}\right)Z^\alpha.
\]
In particular, the $H$-action on the monomial $Z^\alpha$ corresponds to the character
\begin{align*}
    \chi_\alpha(d(\bs,\bt)) = \exp\left(-\sum_{j=1}^n\left(\sum_{k=1}^m \alpha_{jk}\right) s_j
                + \sum_{k=1}^m\left(\sum_{j=1}^n \alpha_{jk}\right) t_k\right).
\end{align*}
This can be used to see that the $H$-action has higher multiplicity for some $n,m\geq 2$.

For example, for $n = m = 2$ and a given $\alpha \in \N^{2 \times 2}$ we have $\chi_\alpha = \chi_\beta$ for any $\beta$ given by
\[
    \beta = \alpha +
        \begin{pmatrix}
            q & -q \\
            -q & q
        \end{pmatrix}
\]
such that $0 \leq q \leq \min(\alpha_{12},\alpha_{21})$ is an integer. As a consequence of Theorem~\ref{thm:commToeplitz-maxCompact} we conclude that for $H$ the maximal torus of diagonal matrices in $\SU(2,2)$ the $C^*$-algebra $\cT^{(\lambda)}(\cA^H)$ is noncommutative on $\cH^2_{(\lambda)}(D_{2,2}^I)$ for every $\lambda > 3$.

Noncommutative $C^*$-algebras generated by Toeplitz operators can be similarly obtained for other values of $n,m \geq 2$.
\end{example}


\begin{thebibliography}{XX}
\bibitem{ADO} M. Aristidou, M. Davidson and G. \'Olafsson, \textit{Laguerre functions on symmetric cones and recursion relations in the real case}, Journal of Computational and Applied Mathematics \textbf{199} (2007), No. 1, 95–112.

\bibitem{BVQuasiRadial} W. Bauer and N. Vasilevski, \textit{On the structure of a commutative Banach algebra generated By Toeplitz operators with quasi-radial quasi-homogeneous symbols}, Integral Equations Operator Theory \textbf{74} (2012), No. 2, 199--231.

\bibitem{Berezin} F. A. Berezin, \textit{Quantization in complex symmetric spaces}, (Russian) Izv. Akad. Nauk. SSSR Ser. Mat.  \textbf{39} (1975), No. 2, 363–-402.

\bibitem{Berger} M. Berger, \textit{Les espaces sym\'etriques noncompacts}, Annales Scientifique de l'\'Ecole Normale Sup\'erieure \textbf{74} (1957), No. 3, 85--177.

\bibitem{DGZ} M. Davidson and G. \'Olafsson and G. Zhang, \textit{Laplace and Segal-Bargmann transforms on Hermitian symmetric spaces and orthogonal polynomials}, Journal of Functional Analysis \textbf{204} (2003), No. 1, 157--195.

\bibitem{Englis} M. Engli\v{s},  \textit{Density of algebras generated by Toeplitz operators on Bergman spaces}, Arkiv f\"{o}r Matematik \textbf{30} (1992), No. 2, 227--243.

\bibitem{EnglisUpmeier} M. Engli\v{s} and H. Upmeier, \textit{Toeplitz quantization and asymptotic expansions for real bounded symmetric domains}, Mathematische Zeitschrift \textbf{268} (2011), 931--967.

\bibitem{FarautKoranyi} J. Faraut and A. Koranyi, \textit{Functions spaces and reproducing kernels on bounded symmetric domains}, Journal of Functional Analysis \textbf{88} (1990), No. 1, 64--89.

\bibitem{GKVRadial} S. Grudsky, A. Karapetyants and N. Vasilevski, \textit{Toeplitz operators on the unit ball in $\C^n$ with radial symbols}, Journal Operator Theory \textbf{49} (2003), No. 2, 325--346.

\bibitem{GKVHyperbolic} S. Grudsky, A. Karapetyants and N. Vasilevski, \textit{Dynamics of properties of Toeplitz operators on the upper half-plane: Hyperbolic case}, Boletin de la Sociedad Matematica Mexicana \textbf{10} (2004), 119--138.

\bibitem{GKVParabolic}  S. Grudsky, A. Karapetyants and N. Vasilevski, \textit{Dynamics of properties of Toeplitz operators on the upper half-plane: Parabolic case}, Journal of Operator Theory \textbf{52} (2004), No. 1, 185--204.

\bibitem{GQV} S. Grudsky, R. Quiroga-Barranco, and N. Vasilevski, \textit{Commutative $C^*$-algebras of Toeplitz operators and quantization on the unit disk}, Journal of Functional Analysis \textbf{234} (2006), No. 1, 1--44.

\bibitem{HarishChandraIV}  Harish-Chandra, \textit{Representations of semisimple Lie groups. IV}, American Journal of Mathematics \textbf{77} (1955), 743--777.

\bibitem{HarishChandraSpherical1} Harish-Chandra, \textit{Spherical functions on a semisimple Lie group. I}, American Journal of Mathematics \textbf{80} (1958), 241--310.

\bibitem{HarishChandraSpherical2} Harish-Chandra, \textit{Spherical functions on a semisimple Lie group. II}, American Journal of Mathematics \textbf{80} (1958), 553--613.

\bibitem{Helgason1970} S. Helgason, \textit{A duality for symmetric spaces with applications to group representations}, Advances in Mathematics \textbf{5} (1970), 1--154.

\bibitem{HelgasonBook} S. Helgason, \textit{Differential geometry, Lie groups and symmetric spaces}, Graduate Studies in Mathematics, 34. American Mathematical Society, Providence, RI, 2001.

\bibitem{HilgertOlafsson} J. Hilgert and G. \'Olafsson, \textit{Causal Symmetric Spaces, Geometry and Harmonic Analysis}, Perspectives in Mathematics, Vol. 18, Academic Press, New York, 1997.

\bibitem{Knapp} A.W. Knapp, \textit{Lie groups beyond an introduction}, Second edition. Progress in Mathematics, \textbf{140}. Birkh\"auser Boston, Inc., Boston, MA, 2002. 812 pp.

\bibitem{Kobayashi} T. Kobayashi, \textit{Multiplicity-free theorems of the restrictions
    of unitary highest-weight modules with respect to reductive symmetric pairs}, ``Representation Theory and Automorphic Forms'', Progress in Mathematics \textbf{255} (2008), 45--109.

\bibitem{Ol1991} G. \'Olafsson, \textit{Symmetric Spaces of Hermitian type}, Differential Geometry and its Applications \textbf{I} (1991), 195--233.

\bibitem{Ol2000} G. \'Olafsson, \textit{Analytic continuation in representation theory and harmonic analysis}. Global analysis and harmonic analysis (Marseille-Luminy, 1999), 201--233, S\'emin. Congr. \textbf{4}, Soc. Math. France, Paris, 2000.

\bibitem{OlafssonOrsted} G. \'Olafsson and B. {\O}rsted, \textit{Generalizations of the Bargmann transform}, Lie theory and its applications in physics (Clausthal, 1995), 3--14, World Sci.Publ., River Edge, NJ, 1996.

\bibitem{QSProjective}  Raul Quiroga-Barranco and A. Sanchez-Nungaray, \textit{Commutative $C^*$-algebras of Toeplitz operators on complex projective spaces}, Integral Equations Operator Theory \textbf{71} (2011), No. 2, 225--243.

\bibitem{QVReinhardt}  Raul Quiroga-Barranco and N. Vasilevski, \textit{Commutative algebras of Toeplitz operators on the Reinhardt domains}, Integral Equations Operator Theory \textbf{59} (2007), No. 1, 67--98.

\bibitem{QVBall1} R. Quiroga-Barranco and N. Vasilevski, \textit{Commutative $C^*$-algebras of Toeplitz operators on the unit ball. I. Bargmann-type transforms and spectral representations of Toeplitz operators}, Integral Equations Operator Theory \textbf{59} (2007), No. 3, 379--419.

\bibitem{QVBall2} R. Quiroga-Barranco and N. Vasilevski, \textit{Commutative $C^*$-algebras of Toeplitz operators on the unit ball. II. Geometry of the level sets of symbols}, Integral Equations Operator Theory \textbf{60} (2008), No. 1, 89--132.

\bibitem{Satake} I. Satake, \textit{Algebraic structures of symmetric domains}, Princeton University Press, Princeton, N.J., 1980.

\bibitem{VasilevskiTube} N. Vasilevski, \textit{The Bergman space in tube domains, and commuting Toeplitz operators}, Dokl. Akad. Nauk \textbf{372} (2000), No. 1, 9--12.

\bibitem{VasilevskiBook} N. L. Vasilevski, \textit{Commutative algebras of Toeplitz operators on the Bergman space}, Operator Theory: Advances and Applications, \textbf{185} Birkh\"auser Verlag, Basel, 2008.

\bibitem{Wallach1} N. R. Wallach, \textit{The analytic continuation of discrete series I}, Transactions of the American Mathematical Society \textbf{251} (1979), 1--17.

\end{thebibliography}
\end{document}